\theoremstyle{plain}
\numberwithin{equation}{section}
\newtheorem{Theorem}{Theorem}
\newtheorem{Lemma}[Theorem]{Lemma}
\newtheorem{Proposition}[Theorem]{Proposition}
\theoremstyle{remark}
\date{}
\title[Asymptotics of spectral gaps]
{Asymptotics of spectral gaps of 1D Dirac operator with two
exponential terms potential}
\author{Berkay Anahtarci}
\address{Sabanci University, Orhanli,
34956 Tuzla, Istanbul, Turkey}
 \email{berkaya@sabanciuniv.edu}
\author{Plamen Djakov}
\address{Sabanci University, Orhanli,
34956 Tuzla, Istanbul, Turkey}
 \email{djakov@sabanciuniv.edu}
\begin{document}

\begin{abstract}
The one-dimensional Dirac operator
\begin{equation*}
L = i \begin{pmatrix} 1 & 0 \\ 0 & -1 \end{pmatrix} \frac{d}{dx}
+\begin{pmatrix} 0 & P(x) \\ Q(x) & 0 \end{pmatrix},
\quad  P,Q \in L^2 ([0,\pi]),
\end{equation*}
considered on $[0,\pi]$ with periodic and antiperiodic boundary
conditions, has discrete spectra. For large enough $|n|,\, n \in
\mathbb{Z}, $ there are two (counted with multiplicity) eigenvalues
$\lambda_n^-,\lambda_n^+ $ (periodic if $n$ is even, or antiperiodic
if $n$ is odd) such that $|\lambda_n^\pm - n |<1/2.$

We study the asymptotics of spectral gaps $\gamma_n =\lambda_n^+ -
\lambda_n^-$ in the case $$P(x)=a e^{-2ix} + A e^{2ix}, \quad Q(x)=b
e^{-2ix} + B e^{2ix},$$ where $a, A, b, B$ are nonzero complex
numbers. We show, for large enough $m,$ that $\gamma_{\pm 2m}=0 $ and
\begin{align*}
\gamma_{2m+1} = \pm 2 \frac{\sqrt{(Ab)^m (aB)^{m+1}}}{4^{2m} (m!)^2 }
\left[  1 + O \left( \frac{\log^2 m}{m^2}\right) \right],
\end{align*}
\begin{align*}
\gamma_{-(2m+1)} = \pm 2\frac{\sqrt{(Ab)^{m+1} (aB)^m}}{4^{2m} (m!)^2}
\left[  1 + O \left( \frac{\log^2 m}{m^2}\right) \right].
\end{align*}

\end{abstract}
\maketitle

\section{Introduction}

Consider one-dimensional Dirac operators of the form
\begin{equation}\label{i1}
Ly= i \begin{pmatrix} 1 & 0 \\ 0 & -1 \end{pmatrix}  \frac{dy}{dx} +
v(x)y, \quad  y= \begin{pmatrix} y_1  \\ y_2 \end{pmatrix}, \quad
 v=\begin{pmatrix} 0 & P \\ Q & 0 \end{pmatrix}
\end{equation}
with $\pi$-periodic complex valued functions  $P,Q \in L^2([0,
\pi]).$ The operator $L$ is symmetric if and only if
$Q(x)=\overline{P(x)};$ then  $L$ gives rise to a self-adjoint
operator in $L^2 (\mathbb{R}, \mathbb{C}^2)$  whose spectrum is
absolutely continuous and has a band-gap structure, i.e., $Sp(L)=
\mathbb{R} \setminus \bigcup_{n \in \mathbb{Z}}(\lambda_n^-,
\lambda_n^+)$.  The points $\lambda_n^-, \lambda_n^+$ are eigenvalues
of the same operator $L$ subject to periodic ($Per^+$) or
antiperiodic ($Per^-$) boundary conditions:
\begin{equation*}
Per^+: y(\pi)=y(0); \quad Per^-: y(\pi)=-y(0)
\end{equation*}
(see \cite{BESch, LS} for more details).

It is known that the potential smoothness determines the asymptotic
behavior of the sequence of {\em spectral gaps} $\gamma_n =
\lambda_n^+ - \lambda_n^-.$  Moreover, in the self-adjoint case the
asymptotic behavior of $(\gamma_n) $ determines the potential
smoothness as well. This phenomenon was first discovered and studied
for Hill-Schr\"odinger operators (see \cite{Hoch63, Hoch65, MO, MT,
Tr, KaMi01, DM3, DM15}). The situation is similar for self-adjoint
Dirac operators but the relationship between the smoothness of
potential functions $P, Q$ and the decay rate of spectral gaps
$\gamma_n $ has been studied later \cite{GKM, GK1, Mit1, Mit2, DM7,
DM6, DM15}.

In the non-self-adjoint case, for both Hill-Schr\"odinger and Dirac
operators, the  decay rate of $(|\gamma_n|)$ does not determine the
potential smoothness as Gasymov's example \cite{Gas} and its
modifications in the Dirac case show. However Tkachenko \cite{Tk92,
Tk94, Tk01} discovered that the potential smoothness could be
determined by the rate of decay of $(|\gamma_n|+ |\delta_n| ),$ where
$\delta_n $ is the difference between $\lambda_n^+$ and the closest
Dirichlet eigenvalue $\mu_n $ (see also \cite{ST96, DM5, DM7, DM15}).

Let us mention also the result of  Harrell \cite{Har}, Avron and
Simon \cite{AS} who has found the asymptotics of spectral gaps for
the Mathieu-Hill operator $L= -d^2/ dx^2 + 2 a \cos (2x)$, $a \in
\mathbb{R}$. They showed that
\begin{equation*}
\label{HAS} \gamma_n= \frac{8(|a|/4)^n}{[(n-1)!]^2} ( 1 + \rho_n)
\quad \text{with} \;\; \rho_n = o(1/n^2).
\end{equation*}
Recently \cite{AD} we have refined their result  by proving that
$\rho_n = - \frac{a^2}{4n^3} + O \left (\frac{1}{n^4}  \right ). $
See also \cite{DM10} for results in the case of two term
trigonometric polynomial potential.

Djakov and Mityagin \cite{DM8, DM9} studied the spectral gaps of
Dirac operators with potentials
$$
v(x) =\begin{pmatrix} 0 & P(x) \\ Q(x) & 0 \end{pmatrix}, \quad P(x)=
Q(x)= 2a \cos(2x), \;\; a \in \mathbb{R}\setminus \{0\},
$$
and showed that $\gamma_{-n} = \gamma_n \;\; \forall n,\;$ $\gamma_n
= 0 $ for even $n,$   and for $n=2m+1$ with $m>0$
\begin{equation}
\label{i3} \gamma_{2m+1}= 2|a|   \frac{a^{2m}}{4^{2m} (m!)^2}
 \left[ 1 + O \left(
\frac{\log m}{m} \right) \right], \quad m \to \infty.
\end{equation}
Let us note that here the operator $L$ is considered on the interval
$[0, \pi],$ whereas all  operators in \cite{DM8, DM9} are considered
on  $[0,1],$ and thus the coefficients in (\ref{i3}) are normalized
correspondingly.

In this paper, we study the asymptotics of spectral gaps $\gamma_n =
\lambda_n^+ - \lambda_n^-$ for (non-self-adjoint) Dirac operators
(\ref{i1}) with
\begin{equation*}
P(x)=a e^{-2ix} + A e^{2ix}, \quad  Q(x)=b e^{-2ix} + B e^{2ix},
\quad a, A, b, B \in \mathbb{C} \setminus \{0\}.
\end{equation*}
Our asymptotic formulas refine (\ref{i3}) in the case $a=A=b=B\in
\mathbb{R}$  (see Abstract). The main part of these asymptotics have
been given in \cite[(8.5) in Theorem~29]{DM31} but formula (8.5)
there is based on \cite[Proposition~28]{DM31} which is given without
a proof. We prove a refined version of that proposition in Section 4
(see Propositions~15 and 16). Essentially our approach is the same as
in \cite{DM8}, but we do a more precise asymptotic analysis and
overcome some additional difficulties that arise in the case of
non-self-adjoint operators.

\section{Preliminaries}

The Dirac operator (\ref{i1}), considered on $[0,\pi]$ with periodic
($Per^+$) or antiperiodic ($Per^-$) boundary conditions, gives a
rise to closed operators $L_{Per^\pm} (v)$ acting in $L^2
([0,\pi]).$ The following is well-known (e.g., \cite[Theorem
17]{DM15}).

\begin{Lemma}\label{loc}
The spectra of $L_{Per^\pm} (v)$ are discrete. There is $N_0=N_0
(v)$ such that the union $\cup_{|n|>N_0} D_n,  $ where $D_n =\{z: \,
|z-n|< \frac{1}{2}\},$ contains all but finitely many of the
eigenvalues of $L_{Per^\pm}(v).$

Moreover each disc $D_n, \, |n|>N_0,$ contains exactly two (counted
with algebraic multiplicity) periodic (if $n$ is even) or
antiperiodic (if $n$ is odd) eigenvalues $\lambda_n^-, \lambda_n^+$
(where $Re \,\lambda_n^- < Re \,\lambda_n^+$  or $Re \,\lambda_n^- =
Re \,\lambda_n^+$ and $Im \,\lambda_n^- \leq Im \,\lambda_n^+).$
\end{Lemma}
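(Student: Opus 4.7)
The strategy is a standard Riesz-projection perturbation argument, comparing $L_{Per^\pm}(v)$ to the free operator $L^0 := i\,\mathrm{diag}(1,-1)\,d/dx$. First I would analyze $L^0$: under $Per^+$ an orthogonal basis of eigenfunctions is $e^{2inx}(1,0)^T$ and $e^{2inx}(0,1)^T$ with eigenvalues $-2n$ and $2n$, so $\sigma(L^0_{Per^+}) = 2\mathbb{Z}$ with algebraic multiplicity two at each point; similarly $\sigma(L^0_{Per^-}) = 2\mathbb{Z}+1$ with multiplicity two. Hence every integer $n$ is a double eigenvalue of the free operator with the appropriate parity.

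The core technical step is the norm estimate
\begin{equation*}
\varepsilon_n \;:=\; \sup_{|z - n| = 1/2} \bigl\| V (L^0 - z)^{-1} \bigr\| \longrightarrow 0 \quad \text{as } |n| \to \infty,
\end{equation*}
where $V$ denotes multiplication by the matrix $v(x)$ and $z$ ranges over the circle of the correct parity. Expanding in the exponential basis where $L^0$ is diagonal with entries $\pm k$ and $V$ has matrix elements given by the Fourier coefficients of $P,Q \in L^2$, one splits the resulting Hilbert--Schmidt-type sum into an ``inner'' block of indices $k$ close to $n$ (small because the Fourier tails of $P,Q$ vanish) and an ``outer'' block (small because $\sum_{|k-n|>|n|/2} |k-z|^{-2} \to 0$ and $P,Q$ have bounded $L^2$ norm). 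This is exactly the estimate behind \cite[Theorem 17]{DM15}.

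Granted $\varepsilon_n \to 0$, for all sufficiently large $|n|$ the Neumann series
\begin{equation*}
(L - z)^{-1} \;=\; (L^0 - z)^{-1} \sum_{k \geq 0} \bigl(-V (L^0 - z)^{-1}\bigr)^{k}
\end{equation*}
converges uniformly on $\{|z-n| = 1/2\}$, which therefore lies in the resolvent set of $L_{Per^\pm}(v)$. The Riesz projections
\begin{equation*}
P_n \;=\; -\frac{1}{2\pi i} \oint_{|z-n|=1/2} (L - z)^{-1}\, dz, \qquad P_n^0 \;=\; -\frac{1}{2\pi i} \oint_{|z-n|=1/2} (L^0 - z)^{-1}\, dz,
\end{equation*}
satisfy $\|P_n - P_n^0\| \to 0$. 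Since $P_n^0$ has rank $2$ and the rank of a projection is locally constant under small norm perturbations, $\mathrm{rank}(P_n) = 2$ for $|n| > N_0$, giving exactly two eigenvalues (with algebraic multiplicity) in each $D_n$. Discreteness of the entire spectrum follows because the resolvent exists on the complement of a bounded set union $\bigcup_{|n|>N_0} D_n$, reducing the remaining spectral analysis to a finite-dimensional (compactly perturbed) region. The labelling $\lambda_n^-, \lambda_n^+$ is fixed by the lexicographic convention stated.

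The main obstacle is the verification of $\varepsilon_n \to 0$ in the $L^2$-potential (rather than bounded-potential) setting; once this is in hand, the rest is routine functional calculus.
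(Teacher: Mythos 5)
The paper never proves Lemma~\ref{loc}; it quotes it as known from \cite[Theorem 17]{DM15}, so your sketch has to be measured against the argument given there. Your Riesz--projection skeleton is the standard one, but the decisive estimate you rely on is false, not merely unproven: in general $\varepsilon_n=\sup_{|z-n|=1/2}\bigl\|V(L^0-z)^{-1}\bigr\|$ does \emph{not} tend to zero. Take the bounded potential $P=Q=c$ with $c$ a nonzero constant, and let $\varphi_n=(0,e^{inx})^T$ (unit normalized), an eigenvector of $L^0$ of the correct parity with eigenvalue $n$. Then $V(L^0-z)^{-1}\varphi_n=\frac{c}{n-z}\,(e^{inx},0)^T$, so $\bigl\|V(L^0-z)^{-1}\bigr\|\geq |c|/|n-z|=2|c|$ on the circle $|z-n|=\tfrac12$, for every $n$; for genuinely $L^2$ potentials the situation is only worse. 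The flaw is in your ``inner block'': for an index $k$ close to $n$ there is only \emph{one} resonant denominator $|k-z|^{-1}$, of size about $2$, while the corresponding column of $V$ carries \emph{all} Fourier coefficients of $P$ or $Q$, not just their tails, so that block is of the order of $\|v\|_{L^2}$, not $o(1)$. Consequently the Neumann series for $(L-z)^{-1}$ need not converge on $\partial D_n$ unless the potential is small, and the projection comparison built on it has no foundation.

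The standard repair --- and what \cite{DM15} (cf.\ \cite{KaMi01} for the Schr\"odinger case) actually does --- is to symmetrize or to reduce first: estimate $K_zVK_z$ with $K_z$ a square root of $(L^0-z)^{-1}$, or equivalently perform the Lyapunov--Schmidt reduction onto the two-dimensional unperturbed eigenspace, which is what produces the matrix $(S^{ij})$ of Lemma~\ref{lem2}. There each matrix element is divided by \emph{two} resonant factors $|k-z|^{1/2}|m-z|^{1/2}$, and for the off-diagonal potential $v$ both can be small simultaneously only when the entry carries a Fourier coefficient of the form $p(-2n)$ or $q(2n)$ (precisely the quantities $S_0^{12},S_0^{21}$ in Lemma~\ref{lem2}), which tend to zero because $P,Q\in L^2([0,\pi])$. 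With this replacement the scheme you describe does go through: $\|K_zVK_z\|\to0$ on the circles gives invertibility there, $\|P_n-P_n^0\|\to0$, rank comparison yields exactly two eigenvalues per disc, and discreteness follows from the relative compactness of $V$ with respect to $L^0$. As written, however, your key estimate $\varepsilon_n\to0$ is wrong, and it is not ``exactly the estimate behind'' \cite[Theorem 17]{DM15}.
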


{\em Remark.} In the sequel we assume that $N_0>1$ and consider only
integers $n\in \mathbb{Z}$ with $|n|>N_0$.

In view of Lemma \ref{loc},
\begin{equation}
|\lambda_n^{\pm}-n|<1/2, \quad \text{for } |n|> N_0.
\end{equation}

Technically,  our approach is based on the following lemma
 (see \cite[Section 2.4]{DM15}).
\begin{Lemma}
\label{lem2} Let $ v= \begin{pmatrix} 0 & P \\ Q & 0 \end{pmatrix}
$, and let  $p(m)$ and $q(m), \, m  \in 2\mathbb{Z}$ be respectively
the Fourier coefficients of $P$ and $Q$ about the system $\{e^{imx},
\, m \in 2\mathbb{Z}\}.$  Then, $\lambda =n + z $ with $|z|\leq 1/2$
is an eigenvalue of $L_{Per^\pm} (v)$ if and only if $z$ is an
eigenvalue of a matrix $\begin{pmatrix} S^{11} & S^{12}
\\ S^{21} & S^{22}
\end{pmatrix}$ which entrees $S^{ij}=S^{ij} (n,z;v)$ are given by
\begin{equation}\label{S_ij}
S^{ij}(n,z)= \sum_{k=0}^{\infty}S_k^{ij}(n,z),
\end{equation}
where
\begin{equation}\label{S_0}
S_0^{11}= S_0^{22}=0, \quad S_0^{12}=p(-2n), \quad S_0^{21}=q(2n),
\end{equation}
and for $\nu =1, 2, \ldots $
\begin{equation}\label{odd}
S_{2\nu}^{11}= S_{2\nu}^{22}=0, \quad S_{2\nu -1}^{12}=S_{2\nu
-1}^{21}=0,
\end{equation}
\begin{align} S_{2\nu-1}^{11} &= \sum_{j_1,\ldots,j_{2\nu-1}\neq
n}\label{S_11} \frac{p(-n-j_1)q(j_1+j_2)\cdots p(-j_{2\nu-2}-j_{2\nu
-1})q(j_{2\nu -1}+n)}
{(n-j_1+z)(n-j_2+z)\cdots(n-j_{2\nu-2}+z)(n-j_{2\nu-1}+z)},\\
\label{S_22} S_{2\nu-1}^{22} &= \sum_{j_1,\ldots,j_{2\nu-1}\neq n}
\frac{q(n+j_1)p(-j_1-j_2)\cdots
q(j_{2\nu-2}+j_{2\nu-1})p(-j_{2\nu-1}-n)}
{(n-j_1+z)(n-j_2+z)\cdots(n-j_{2\nu-2}+z)(n-j_{2\nu-1}+z)}
\end{align}
\begin{align}
\label{S_12} S_{2\nu}^{12} &= \sum_{j_1,\ldots,j_{2\nu}\neq n}
\frac{p(-n-j_1)q(j_1+j_2)\cdots
q(j_{2\nu-1}+j_{2\nu})p(-j_{2\nu}-n)}{(n-j_1+z)(n-j_2+z)
\cdots(n-j_{2\nu-1}+z)(n-j_{2\nu}+z)},\\\label{S_21}
S_{2\nu}^{21} &= \sum_{j_1,\ldots,j_{2\nu}\neq n}
\frac{q(n+j_1)p(-j_1-j_2)\cdots
p(-j_{2\nu-1}-j_{2\nu})q(j_{2\nu}+n)}{(n-j_1+z)(n-j_2+z)
\cdots(n-j_{2\nu-1}+z)(n-j_{2\nu}+z)},
\end{align}
where in all sums  $j_k \in n +2\mathbb{Z}.$
\end{Lemma}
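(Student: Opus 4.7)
The plan is a Lyapunov--Schmidt reduction of the eigenvalue equation $Ly=\lambda y$ onto the two-dimensional kernel of $L^{0}-n$, where $L^{0}:=i\,\mathrm{diag}(1,-1)\,d/dx$ is the free operator. A direct computation gives $L^{0}(e^{-ikx}e_{1})=k\,e^{-ikx}e_{1}$ and $L^{0}(e^{ikx}e_{2})=k\,e^{ikx}e_{2}$; under $Per^{\pm}$ the admissible frequencies are $k\in n+2\mathbb{Z}$, so the resonant eigenspace $\ker(L^{0}-n)$ is two-dimensional, spanned by $e^{-inx}e_{1}$ and $e^{inx}e_{2}$. Let $\Pi_{n}$ denote the orthogonal projection onto it.

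Expanding an eigenfunction as $y_{1}=\sum_{k}u(k)e^{-ikx}$, $y_{2}=\sum_{k}w(k)e^{ikx}$ with $k\in n+2\mathbb{Z}$, the equation $Ly=(n+z)y$ becomes the infinite linear system
\begin{align*}
(k-n-z)\,u(k)+\sum_{j}p(-k-j)\,w(j) &= 0,\\
(k-n-z)\,w(k)+\sum_{j}q(k+j)\,u(j) &= 0.
\end{align*}
For $|n|>N_{0}$ and $|z|\leq 1/2$ one has $|n-k+z|\geq 3/2$ on the non-resonant set $k\neq n$, so these equations can be inverted there to give $u(k)=\tfrac{1}{n-k+z}\sum_{j}p(-k-j)w(j)$ and the symmetric formula for $w(k)$. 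Iterating these two identities alternately expresses every non-resonant amplitude as a Neumann series in the two resonant amplitudes $u(n),w(n)$, and substituting back into the resonant equations ($k=n$) collapses the problem to a closed $2\times 2$ system $S(n,z)\binom{u(n)}{w(n)}=z\binom{u(n)}{w(n)}$. Because $v$ is off-diagonal each substitution flips between the $u$- and $w$-variables: chains returning to $u(n)$ use an even number $2\nu$ of $v$'s hence $2\nu-1$ propagators $(n-j_{\bullet}+z)^{-1}$, while chains ending at $w(n)$ use $2\nu+1$ applications of $v$ and $2\nu$ propagators. This yields the parity vanishing (\ref{odd}) automatically, and reading the alternating $p$-$q$ products off the substitution tree reproduces (\ref{S_11})--(\ref{S_21}) term by term.

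The main technical obstacle is to justify absolute convergence of the Neumann series for general $P,Q\in L^{2}$. I would recast the iteration operator-theoretically as $T(n,z):=(L^{0}-n-z)^{-1}(I-\Pi_{n})\,v$ acting on $L^{2}([0,\pi],\mathbb{C}^{2})$; a Parseval computation gives the Hilbert--Schmidt bound
\[
\|T(n,z)\|_{\mathrm{HS}}^{2}\leq (\|P\|_{2}^{2}+\|Q\|_{2}^{2})\sum_{k\in n+2\mathbb{Z},\,k\neq n}|n-k+z|^{-2},
\]
and a sharpened estimate on the iterate $T^{2}$, standard in this circle of problems, shows that $\|T(n,z)^{2}\|_{\mathrm{op}}<1$ uniformly for $|z|\leq 1/2$ once $|n|>N_{0}(v)$. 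With this in hand the Neumann series converges in operator norm, the compressed resolvent is well-defined, its matrix in the basis $\{e^{-inx}e_{1},e^{inx}e_{2}\}$ equals $S(n,z)$, and a standard Schur-complement argument yields the stated equivalence $\lambda=n+z\in\mathrm{Sp}(L_{Per^{\pm}}(v))\iff z\in\mathrm{Sp}(S(n,z))$.
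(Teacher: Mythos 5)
Your Lyapunov--Schmidt reduction onto $\ker(L^0-n)=\mathrm{span}\{e^{-inx}e_1,\,e^{inx}e_2\}$ is exactly the standard derivation of this lemma; note that the paper itself offers no proof but quotes the statement from \cite[Section 2.4]{DM15}, and your route (Fourier system, inversion off the resonant modes, alternating $p$--$q$ substitution chains producing the parity structure \eqref{odd} and the series \eqref{S_11}--\eqref{S_21}, then a Schur-complement equivalence) coincides with the argument given there. The combinatorics of your substitution tree does reproduce the stated formulas, including $S_0^{12}=p(-2n)$, $S_0^{21}=q(2n)$ and the denominators $(n-j_k+z)$.

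The one place where your sketch leans on an assertion rather than an argument is the convergence step for general $P,Q\in L^2$. Two points deserve care. First, $v$ is not a bounded operator on $L^2$ when $P,Q$ are merely $L^2$, so $T(n,z)=(L^0-n-z)^{-1}(I-\Pi_n)v$ should be defined through its Fourier matrix (which, as you note, is Hilbert--Schmidt); that is harmless but should be said. Second, your Hilbert--Schmidt bound is uniform in $n$ but \emph{not} small: $\sum_{k\neq n}|n-k+z|^{-2}$ is of order one, so $\|T\|_{\mathrm{HS}}\lesssim\|P\|_2+\|Q\|_2$ and the Neumann series does not converge from this bound alone for large potentials. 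The needed smallness of $\|T(n,z)^2\|$ for $|n|>N_0(v)$ is true, but it requires the usual splitting $v=v_M+\tilde v$ into a trigonometric polynomial plus an $L^2$-small tail: for the polynomial part the product of two consecutive factors gains a factor $O(1/|n|)$ because the two propagators $(n-k+z)^{-1}(n-j+z)^{-1}$ with $j\approx -k$ satisfy $|n-k||n-j|\gtrsim|n|$, while every term containing $\tilde v$ carries a small Hilbert--Schmidt factor. Making this explicit (or invoking it precisely from \cite{DM15}) closes the only real gap; for the concrete two-term potentials used in this paper the convergence is of course much easier, as in Lemma \ref{lem4}.
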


For each $\nu \in \mathbb{Z}_+$ the change of summation indices
$i_s= j_{2\nu+1-s}$, $s=1,\ldots, 2\nu +1$ shows that $S_{2 \nu
+1}^{11}(n,z)=S_{2 \nu +1}^{22}(n,z);$ therefore,
\begin{equation}\label{i}
S^{11}(n,z)=S^{22}(n,z).
\end{equation}

For convenience we set
\begin{equation}\label{ab}
\alpha_n(z):=S^{11}(n,z),\quad
\beta_n^+(z):=S^{21}(n,z),\quad \beta_n^-(z):=S^{12}(n,z).
\end{equation}
In these notations the characteristic equation associated with the
matrix $(S^{ij})$ becomes
\begin{equation}
\label{be}
(z-\alpha_n(z))^2=\beta_n^-(z)\beta_n^+(z).
\end{equation}
In view of Lemmas \ref{loc} and \ref{lem2},  for large enough $|n| $
equation (\ref{be}) has in the disc $|z|\leq 1/2$
 exactly the following two
roots (counted with multiplicity):
\begin{equation}
\label{zn}  z_n^- = \lambda_n^- -n, \quad z_n^+ = \lambda_n^+ -n.
\end{equation}

In the sequel we consider potentials of the form
$v(x)=\begin{pmatrix} 0 & P
\\ Q & 0 \end{pmatrix}$ with $P(x)=a e^{-2ix} + A e^{2ix}$ and
$Q(x)=b e^{-2ix} + B e^{2ix}.$ Then we have
\begin{equation}\label{pq}
p(-2)=a, \;\; p(2)=A; \quad q(-2)=b, \;\; q(2)=B,
\end{equation}
and
\begin{equation}\label{pq0}
\quad p(m)=q(m)=0 \;\;\text{for} \; m \neq \pm 2.
\end{equation}

Let us change in (\ref{S_21}) the indices $j_2, j_4, \ldots,
j_{2\nu}$ by $-j_2,  -j_4, \ldots, -j_{2\nu}.$ Then by \eqref{pq} and
\eqref{pq0} each nonzero term in the resulting sum comes from a
$2\nu$-tuple of indices $(j_1,\ldots, j_{2\nu})$ with $j_1, j_3,
\ldots, j_{2\nu-1} \neq n$ and $j_2, j_4, \ldots, j_{2\nu} \neq -n$
such that
\begin{equation}\label{w1}
(n+j_1)+(j_2-j_1)+ \cdots + (j_{2\nu}-j_{2\nu-1}) + (n-j_{2\nu})= 2n
\end{equation}
and
\begin{equation}\label{w2}
n+j_1, j_2-j_1, \ldots, j_{2\nu}-j_{2\nu-1}, n-j_{2\nu} \in \{ -2,2
\}.
\end{equation}
So by \eqref{S_ij}, \eqref{S_0} and \eqref{ab} we obtain that
\begin{equation}
\beta_n^+(z)=q(2n)+ \sum_{\nu=1}^\infty \mathcal{B}^+_{2\nu}(n,z),
\end{equation}
where
\begin{equation}\label{rho}
\mathcal{B}^+_{2\nu}= \sum_{(j_l)_{l=1}^{2\nu} \in I_{2\nu}}
\frac{q(n+j_1)p(j_2-j_1)\cdots p(j_{2\nu}-j_{2\nu-1})
q(n-j_{2\nu})}{(n-j_1+z)(n+j_2+z)\cdots(n-j_{2\nu-1}+z)(n+j_{2\nu}+z)},
\end{equation}
and
\begin{align}
\label{I}
I_{2\nu}= \{ (j_l)_{l=1}^{2\nu}:
j_1, j_3, \ldots, j_{2\nu-1} \neq n;
j_2, j_4, \ldots, j_{2\nu} \neq -n;\\ \nonumber
  n+j_1, j_2-j_1, \ldots, j_{2\nu}-j_{2\nu-1}, n-j_{2\nu} \in \{ -2,2 \} \}.
\end{align}

Recall that a walk $x$ on the integer grid $\mathbb{Z}$ from $a$ to
$b$ (where $ a, b \in \mathbb{Z}$) is a finite sequence of integers
$x=(x_t)_{t=1}^{\mu}$ with $x_1+x_2+ \ldots + x_\mu=b-a$. The
numbers
$$
j_k = a + \sum_{t=1}^k x_t, \quad  1\leq k < \mu
$$
are known as {\em vertices} of the walk $x.$

In view of  \eqref{w1} and \eqref{w2}, there is one-to-one
correspondence between the nonzero terms in \eqref{rho} and the
\emph{admissible} walks $x=(x_t)_{t=1}^{2\nu +1}$ on $\mathbb{Z}$
from $-n$ to $n$ with steps $x_t=\pm 2$ such that $j_1,  j_3,
\ldots, j_{2\nu -1} \neq n$ and $j_2, j_4, \ldots, j_{2\nu} \neq -n.
$ For every such walk $x=(x_t)_{t=1}^{2\nu +1}$  we set
\begin{equation}
\label{h(x,z)} h^+ (x,z)= \frac{q(x_1)p(x_2)q(x_3)\cdots p(x_{2\nu})
q(x_{2\nu
+1})}{(n-j_1+z)(n+j_2+z)\cdots(n-j_{2\nu-1}+z)(n+j_{2\nu}+z)}.
\end{equation}
Let $X_n(r), \; r=0, 1, 2, \ldots $ denote the set of all admissible
walks from $-n$ to $n,$ with $r$ negative steps  if $n>0$ or with
$r$ positive steps if $n<0.$ It is easy to see that every walk $x
\in X_n (r)$ has totally $|n| + 2r $  steps because $\sum x_t = 2n.
$ In these notations, we have
\begin{equation}\label{sigma_p}
\beta_n^+(z)= \sum_{r=0}^\infty \sigma^+_r(n,z) \quad \text{with} \quad
\sigma^+_r(n,z)=\sum_{x\in X_n(r)} h^+(x,z).
\end{equation}

Of course,  we may write similar formulas for $\beta_n^- (z) $ as
well.  A walk  $y=(y_t)_{t=1}^{2\nu +1}$ from $n$ to $-n$ is {\em
admissible} if its steps are $\pm 2$ and its vertices satisfy $j_1,
j_3,  \ldots, j_{2\nu -1} \neq  -n$ and $j_2, j_4, \ldots, j_{2\nu}
\neq n. $   We set
\begin{equation}
\label{h-} h^- (y,z)= \frac{p(y_1)q(y_2)p(y_3)\cdots q(y_{2\nu})
p(y_{2\nu +1})}{(n+j_1+z)(n-j_2+z)\cdots(n +
j_{2\nu-1}+z)(n-j_{2\nu}+z)},
\end{equation}
and let $Y_n(r), \; r=0, 1, 2, \ldots $ denote the set of all
admissible walks from $n$ to $-n$  having $r$ positive steps  if
$n>0$  or $r$ negative steps if $n<0.$ Then, changing in (\ref{S_12})
the indices $j_1, \ldots, j_{2\nu -1}$ by $-j_1,  \ldots, -j_{2\nu
-1},$ we see that
\begin{equation}\label{-sigma_p}
\beta_n^-(z)= \sum_{r=0}^\infty \sigma^-_r(n,z) \quad \text{with} \quad
\sigma^-_r(n,z)=\sum_{y\in Y_n(r)} h^-(y,z).
\end{equation}

Finally, we  consider $ \alpha_n (z).$  A walk $(w_t)_{t=1}^{2\nu} $
from $n $ to $n$ is {\em admissible} if its steps are $\pm 2 $ and
its vertices satisfy $j_1, \ldots, j_{2 \nu -1} \neq -n    $  and $
j_2, \ldots, j_{2\nu -2} \neq n. $ We set
\begin{equation}
\label{h} h (w,z)= \frac{p(w_1)q(w_2)\cdots p(w_{2\nu -1})
q(w_{2\nu})}{(n+j_1+z)(n-j_2+z)\cdots(n + j_{2\nu-2}+z)(n-j_{2\nu
-1}+z)},
\end{equation}
and let $W_n(\nu), \; \nu =1, 2, \ldots $ denote the set of all
admissible walks from $n$ to $n$  having $2\nu $ steps. In view of
\eqref{S_ij} and \eqref{ab}, changing in (\ref{S_11}) the indices
$j_1, \ldots, j_{2\nu -1}$ by $-j_1,  \ldots, -j_{2\nu -1},$ we
obtain that
\begin{equation}\label{tau}
\alpha_n(z)=\sum_{\nu=1}^\infty \tau_{\nu}(n,z) \quad \text{with}
\quad  \tau_{\nu}(n,z) = \sum_{w \in W_n (\nu)} h(w, z).
\end{equation}

Of course $\sigma_r^\pm $ and $\beta^\pm_n $ depend on the potential
functions but in the above notations this dependence is suppressed.
If we use instead the notations $\sigma_r^\pm (P,Q;n,z) $ and
$\beta_n^\pm (P,Q;z)$ then the following holds.
\begin{Lemma}
\label{lemPQ} In the above notations,
\begin{equation}
\label{PQsigma} \sigma^-_r (P,Q;n,z) =\sigma^+_r (Q,P;-n,-z), \quad
r\in \mathbb{Z}_+,
\end{equation}
and
\begin{equation}
\label{PQ} \beta_n^- (P,Q;z) = \beta_{-n}^+ (Q,P;-z).
\end{equation}
\end{Lemma}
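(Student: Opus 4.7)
The plan is to prove the identity \eqref{PQsigma} first, since \eqref{PQ} then follows immediately by summing over $r$ in view of the series representations \eqref{sigma_p} and \eqref{-sigma_p}. To establish \eqref{PQsigma}, my strategy is to produce a term-by-term identification: observe that the set $Y_n(r)$ appearing in the definition of $\sigma^-_r(P,Q;n,z)$ coincides, as a set of walks, with the set $X_{-n}(r)$ appearing in the definition of $\sigma^+_r(Q,P;-n,-z)$, and then check that the summand $h^-(x,z)$ (computed with potentials $P,Q$ at index $n$) equals $h^+(x,-z)$ (computed with potentials $Q,P$ at index $-n$) for each common walk $x$.

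First I would verify that $Y_n(r)=X_{-n}(r)$. Unwinding definitions, $X_{-n}(r)$ consists of admissible walks from $-(-n)=n$ to $-n$, admissible meaning the vertices $j_k=n+\sum_{t=1}^k x_t$ satisfy $j_1,j_3,\ldots \ne -n$ and $j_2,j_4,\ldots \ne n$; and the step-count convention is $r$ negative steps when $-n>0$ (i.e., $n<0$) and $r$ positive steps when $-n<0$ (i.e., $n>0$). Comparing with the definition of $Y_n(r)$ just after \eqref{h-}, these descriptions coincide exactly, and the vertex formula $j_k=n+\sum_{t=1}^k x_t$ is literally the same in both settings.

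Next I would verify the pointwise identity $h^+(x,-z;-n,Q,P)=h^-(x,z;n,P,Q)$ for any $x\in Y_n(r)=X_{-n}(r)$ of length $2\nu+1$. In the numerator, swapping $(P,Q)\mapsto(Q,P)$ converts the alternating product $q(x_1)p(x_2)\cdots q(x_{2\nu+1})$ of \eqref{h(x,z)} into $p(x_1)q(x_2)\cdots p(x_{2\nu+1})$, which is exactly the numerator in \eqref{h-}. In the denominator, replacing $(n,z)$ by $(-n,-z)$ in \eqref{h(x,z)} turns each factor $(n-j_{2k-1}+z)$ into $-(n+j_{2k-1}+z)$ and each factor $(n+j_{2k}+z)$ into $-(n-j_{2k}+z)$; since there are exactly $2\nu$ factors, the overall sign is $(-1)^{2\nu}=1$, producing the denominator in \eqref{h-}. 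Summing this pointwise equality over the common index set yields
\[
\sigma^-_r(P,Q;n,z)=\sum_{x\in Y_n(r)} h^-(x,z)=\sum_{x\in X_{-n}(r)} h^+(x,-z)=\sigma^+_r(Q,P;-n,-z),
\]
which is \eqref{PQsigma}; summing over $r$ via \eqref{sigma_p} and \eqref{-sigma_p} then gives \eqref{PQ}.

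There is no real obstacle here beyond careful bookkeeping: the only point to watch is that the number of denominator factors is even (equal to $2\nu$, not $2\nu+1$) so that the sign change from $n\mapsto -n,\ z\mapsto -z$ cancels, and that the vertex formula in $X_{-n}(r)$ genuinely agrees with that in $Y_n(r)$ because both walks start at the integer $n$.
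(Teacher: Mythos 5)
Your proposal is correct and follows essentially the same route as the paper: identify $Y_n(r)$ with $X_{-n}(r)$ and check the pointwise identity $h^-_{P,Q}(y,z)=h^+_{Q,P}(y,-z)$, then sum over walks and over $r$. The paper states these two facts without writing out the sign bookkeeping, which you supply explicitly and correctly (the $2\nu$ denominator factors give an even sign count).
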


\begin{proof}
Let us write also $ h^\pm_{P,Q} (x,z). $ One can easily see that $Y_n
(r) = X_{-n} (r) $ and if $y \in Y_n (r) $ then
$$
h^-_{P,Q} (y,z) = h^+_{Q,P} (y,-z).
$$
Now (\ref{PQsigma}) follows and so (\ref{PQ}) holds as well.

\end{proof}

\begin{Proposition}
\label{evenn} For $n\in 2\mathbb{Z}$ with large enough $|n|$ we have
\begin{equation}
\label{even1} \beta_n^- (z) \equiv 0, \quad \beta_n^+ (z) \equiv 0;
\end{equation}
\begin{equation}
\label{even2} z_n^* = \alpha (z_n^*),   \quad \text{where} \;\; z_n^*
= z_n^- = z_n^+;
\end{equation}
\begin{equation}
\label{even3}    \lambda_n^- = \lambda_n^+.
\end{equation}
\end{Proposition}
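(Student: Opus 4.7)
The plan is to exploit a parity argument on walks to show that the off-diagonal entries $\beta_n^\pm$ of the matrix $S(z)$ vanish identically when $n$ is even and $|n|$ is large, reducing the characteristic equation (\ref{be}) to a simple fixed-point equation.

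Each nonzero term of $\mathcal{B}^+_{2\nu}(n,z)$ in (\ref{rho}) corresponds to an admissible walk from $-n$ to $n$ with exactly $2\nu+1$ steps of size $\pm 2$. If such a walk has $p$ positive steps and $q$ negative steps, then $p-q=n$ (from the net displacement $2n$) while $p+q=2\nu+1$; in particular $p+q$ has the same parity as $p-q=n$. For even $n$ this forces $p+q$ to be even, contradicting $p+q=2\nu+1$. So the index set $I_{2\nu}$ is empty and $\mathcal{B}^+_{2\nu}(n,z)\equiv 0$ for every $\nu\geq 1$. The boundary term $q(2n)$ in the expansion of $\beta_n^+$ also vanishes, since $q$ is supported on $\{\pm 2\}$ by (\ref{pq0}) and $|n|\geq 2$. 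Hence $\beta_n^+(z)\equiv 0$. The same parity argument applied to walks from $n$ to $-n$, together with $p(-2n)=0$, gives $\beta_n^-(z)\equiv 0$. This proves (\ref{even1}).

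With both $\beta_n^\pm$ vanishing identically, the characteristic equation (\ref{be}) collapses to $(z-\alpha_n(z))^2=0$, so any root in the disc must be a fixed point of $\alpha_n$. By Lemma~\ref{loc} and the correspondence in Lemma~\ref{lem2} (eigenvalues of $L$ in $D_n$ match roots of (\ref{be}) in $|z|\leq 1/2$, counted with multiplicity), equation (\ref{be}) has exactly two such roots $z_n^-$ and $z_n^+$. But every zero of $z-\alpha_n(z)$ is a zero of the squared function of order at least $2$. Thus the pair $z_n^-,z_n^+$ must arise from a single simple fixed point $z_n^*$ of $\alpha_n$, yielding (\ref{even2}) and consequently $\lambda_n^-=\lambda_n^+=n+z_n^*$, which is (\ref{even3}).

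The only subtle point is the multiplicity bookkeeping at the end: we need the algebraic multiplicity of an eigenvalue of $L_{Per^\pm}(v)$ to equal the order of vanishing of the characteristic function $(z-\alpha_n(z))^2-\beta_n^-(z)\beta_n^+(z)$ at the corresponding root $z$. This is built into the "counted with multiplicity" clause of Lemma~\ref{loc} together with Lemma~\ref{lem2}, and is precisely what rules out the a priori possibility of two distinct simple fixed points of $\alpha_n$ in the disc.
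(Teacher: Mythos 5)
Your proof is correct and follows essentially the same route as the paper: a parity argument (the paper phrases it as ``an odd number of $\pm 2$ steps cannot sum to a multiple of $4$,'' which is equivalent to your $p-q=n$, $p+q=2\nu+1$ parity clash) shows $\beta_n^\pm\equiv 0$, and then the characteristic equation collapses to $(z-\alpha_n(z))^2=0$, which must have a double root. Your explicit multiplicity bookkeeping via the ``exactly two roots counted with multiplicity'' statement following \eqref{be} just spells out what the paper leaves implicit.
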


\begin{proof}
If $n$ is even then there are no admissible walks from $-n$ to $n.$
Indeed, since every admissible walk has odd number of steps equal to
$\pm 2,$ the sum of all steps is not divisible by  $4$ while $2n$ is
multiple to $4.$ Therefore, it follows that $\beta_n^+ (z) \equiv
0.$ The same argument shows that $\beta_n^- (z) \equiv 0,$ so
(\ref{even1}) is proved.

Now the equation (\ref{be}) takes the form $(z-\alpha_n (z))^2 =0, $
so it has a double root, say $z_n^*.$  Hence (\ref{even2}) and
(\ref{even3}) hold.
\end{proof}

\section{Estimates for $\alpha_n (z)$ and $z_n^\pm$}

In this section we give the asymptotics of $\alpha_n (z)$ and derive
asymptotic formulas for $z_n^\pm = \lambda_n^\pm -n$ using the basic
equation (\ref{be}).

The following lemma gives  preliminary asymptotic estimates of
$\beta_n^\pm (z) $ for odd $n \in \mathbb{Z};$ the precise
asymptotics will be given in the next section.
\begin{Lemma}
\label{lem4} If  $n=\pm (2m+1), \; m \in \mathbb{N}$  then
\begin{equation}
\label{3.1} \beta_n^\pm (z) = O \left  ( (8D^2)^m/m^m \right ), \quad
|z| \leq 1/2,
\end{equation}
where $D= \max \{|a|, |A|, |b|, |B|\}.$
\end{Lemma}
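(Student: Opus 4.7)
The plan is to use the walk expansion \eqref{sigma_p}, writing $\beta_n^+(z)=\sum_{r\geq 0}\sigma_r^+(n,z)$, and to bound each $\sigma_r^+(n,z)$ by estimating the numerator and denominator of $h^+(x,z)$ for admissible walks $x\in X_n(r)$. By Lemma~\ref{lemPQ}, the bound on $\beta_n^-$ reduces to one for $\beta_{-n}^+$ under the swap $P\leftrightarrow Q$, which leaves $D=\max\{|a|,|A|,|b|,|B|\}$ invariant, so it suffices to bound $\beta_n^+$ at both $n=\pm(2m+1)$. I describe $n=2m+1>0$ in detail; the case $n=-(2m+1)$ is symmetric.

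Every walk $x\in X_n(r)$ has $2m+1+2r$ steps of $\pm 2$, so the numerator of $h^+(x,z)$---a product of that many Fourier coefficients from $\{a,A,b,B\}$---is bounded in absolute value by $D^{2m+1+2r}$. For the denominator, I introduce the normalized walk $u_i:=(j_i+n)/2$, which satisfies $u_0=0$, $u_N=n$, and has $\pm 1$ steps; the admissibility conditions become $u_{2s-1}\neq n$ and $u_{2s}\neq 0$. Setting $v_s:=n-u_{2s-1}$ and $w_s:=u_{2s}$, the denominator equals $\prod_{s=1}^{m+r}(2v_s+z)(2w_s+z)$ with each $v_s,w_s\in\mathbb{Z}\setminus\{0\}$. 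The crucial pairing identity is
\[
v_s+w_s = n+(u_{2s}-u_{2s-1}) \in \{2m,\,2m+2\},
\]
which, together with $v_s,w_s\neq 0$, forces $|v_sw_s|\geq 2m-1$ via a short case analysis on signs. Combined with $|2v+z|\geq\tfrac{3}{2}|v|$ for $v\in\mathbb{Z}\setminus\{0\}$ and $|z|\leq 1/2$, this yields
\[
\prod_{s=1}^{m+r}|(2v_s+z)(2w_s+z)| \;\geq\; \bigl(\tfrac{9}{4}(2m-1)\bigr)^{m+r} \;\geq\; (2m)^{m+r} \qquad (m\geq 1).
\]

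Bounding $|X_n(r)|\leq\binom{2m+1+2r}{r}$ (placements of the $r$ negative steps among the $2m+1+2r$ total steps), I obtain
\[
|\sigma_r^+(n,z)| \;\leq\; D\Bigl(\tfrac{D^2}{2m}\Bigr)^m \binom{2m+1+2r}{r}\Bigl(\tfrac{D^2}{2m}\Bigr)^r.
\]
Splitting the $r$-sum at $r=m$: for $r\leq m$, $\binom{2m+1+2r}{r}\leq(5m)^r/r!$ gives an inner sum bounded by $\sum_r(5D^2/2)^r/r!=e^{5D^2/2}$; for $r>m$, the trivial bound $\binom{2m+1+2r}{r}\leq 2\cdot 4^{m+r}$ together with geometric convergence of $\sum(2D^2/m)^r$ (once $m>2D^2$) gives a tail negligible relative to the main part. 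Therefore $|\beta_n^+(z)|\leq C(D)\,(D^2/(2m))^m$, and since $(D^2/(2m))^m=(8D^2)^m/(16m)^m\leq(8D^2)^m/m^m$, the bound \eqref{3.1} follows.

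The main obstacle is the denominator lower bound: a per-factor estimate such as $|2v+z|\geq 3/2$ alone falls short by a factor of $m^m$. One must pair consecutive denominator factors via the sum identity $v_s+w_s\in\{2m,2m+2\}$ and invoke the admissibility constraints $v_s,w_s\neq 0$ to force each pair to contribute $\gtrsim m$ to the denominator. Once this pairing estimate is in hand, the summation over $r$ reduces to routine manipulations of binomial coefficients.
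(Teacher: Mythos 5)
Your proof is correct and follows essentially the same route as the paper: your pairing identity $v_s+w_s\in\{2m,2m+2\}$ with $v_s,w_s\neq 0$ is exactly the paper's key inequality $|n-j_\ell|\cdot|n+j_{\ell+1}|\geq |n|$ (inequality (3.3)) rewritten in normalized coordinates, and the reduction of $\beta_n^-$ to $\beta_{-n}^+$ via Lemma \ref{lemPQ} is also the paper's device. The only differences are bookkeeping: you use a binomial count of $X_n(r)$ and split the sum over $r$ at $r=m$, where the paper uses the cruder bound $\mathrm{card}\,X_n(r)\leq 2^{2\nu}$ and a plain geometric series, so your constants are in fact slightly sharper than needed.
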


\begin{proof}
We prove (\ref{3.1}) for $ \beta_n^+ $ only. The same argument could
be used in the case of $ \beta_n^- $ as well, but by (\ref{PQ}) the
assertion for $ \beta_n^- $ follows if (\ref{3.1}) is known for $
\beta_n^+. $

Fix $r \in \mathbb{Z}_+,$  and  let $x \in X_n (r) $  be a walk from
$-n$  to $n$ having $r$  negative (positive) steps if $n$  is
positive (respectively negative). If $(j_\ell)_{\ell=1}^{2\nu},$  $
\, \nu = m + r, $ are the vertices of $x,$  then
\begin{equation}
\label{3.2} |n \pm j_{\ell}+z| \geq |n \pm j_{\ell}|-2^{-1} \geq
2^{-1}|n \pm j_{\ell}|, \quad  \ell = 1, \ldots, 2\nu.
\end{equation}
On the other hand we have
\begin{equation}
\label{3.3} |n - j_{\ell}| \cdot |n+j_{\ell+1}| \geq   |n|, \quad
\ell =1, \ldots, 2\nu-1.
\end{equation}
Indeed, both $|n- j_\ell | $ and $|n+ j_{\ell+1}| $ are even. If
$j_\ell$  and $j_{\ell+1} $ have the same sign, then at least one of
those numbers is greater than $|n|,$  so \eqref{3.3} follows. Since
$|j_{\ell+1} - j_\ell|=2,$ $ j_\ell$ and $ j_{\ell+1}$ could have
opposite signs if, and only if, $ |j_\ell|= |j_{\ell+1}|=1.$ But then
$$
|n - j_\ell| \cdot |n+j_{\ell+1}|= n^2 - 1 > |n|,
$$
so \eqref{3.3}  holds.  Now \eqref{3.2} and \eqref{3.3} imply, for
$n=\pm(2m+1)$ and $|z|\leq 1/2,$ that
$$
\frac{1}{|n-j_1+z||n+j_2+z|\cdots|n-j_{2\nu-1}+z||n+j_{2\nu}+z|} \leq
\frac{2^{2\nu}}{(2m)^\nu},
$$
so in view of \eqref{h(x,z)} we obtain
$$
|h^+ (x,z)| \leq  D^{2\nu +1} (2/m)^{\nu}, \quad \nu = m+r.
$$
Since the steps of every walk $x\in X_n (r) $ are equal  to $\pm 2,$
we have $card \,X_n (r) \leq 2^{2\nu}. $ Thus,
$$
|\sigma^+_r (n, z)|  \leq \sum_{x\in X_n (r)}  |h^+ (x,z)|  \leq  D
\left (8D^2/m \right )^{m+r},
$$
which implies (\ref{3.1}).

\end{proof}

\begin{Proposition}
\label{propz} For odd $n \in \mathbb{Z} $ with large enough $|n|$
\begin{equation}
\label{zn1} z_n^\pm = \alpha_n (n, z_n^\pm) + O(|n|^{-p}) \;\;
\forall p>0.
\end{equation}
\end{Proposition}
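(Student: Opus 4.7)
The plan is essentially to feed the preliminary estimate from Lemma \ref{lem4} into the characteristic equation (\ref{be}) and extract a bound on $|z_n^\pm - \alpha_n(z_n^\pm)|$. By Lemma \ref{loc} and (\ref{zn}), for $|n|>N_0$ both eigenvalue shifts $z_n^\pm$ lie in the disc $|z|\le 1/2$, so all hypotheses of Lemmas \ref{loc}, \ref{lem2}, \ref{lem4} are available at $z = z_n^\pm$.

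Step one: since $z_n^\pm$ are the two roots in $|z|\le 1/2$ of the characteristic equation, we have the identity
\begin{equation*}
(z_n^\pm - \alpha_n(z_n^\pm))^2 = \beta_n^-(z_n^\pm)\,\beta_n^+(z_n^\pm).
\end{equation*}
Step two: for $n = \pm(2m+1)$, Lemma \ref{lem4} bounds each factor on the right by $O((8D^2)^m/m^m)$, uniformly for $|z|\le 1/2$. Multiplying the two bounds,
\begin{equation*}
|\beta_n^-(z_n^\pm)\,\beta_n^+(z_n^\pm)| = O\!\left( (8D^2)^{2m}/m^{2m}\right).
\end{equation*}
Step three: take square roots to conclude $|z_n^\pm - \alpha_n(z_n^\pm)| = O((8D^2/m)^m)$.

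Step four is the (trivial) observation that delivers (\ref{zn1}): once $m$ is large enough that $8D^2/m < 1/2$, the quantity $(8D^2/m)^m$ decays faster than any negative power of $m$, hence faster than any negative power of $|n| = 2m+1$. Thus $|z_n^\pm - \alpha_n(z_n^\pm)| = O(|n|^{-p})$ for every $p>0$, which is the claim. I do not foresee a real obstacle here: the estimate is essentially immediate from Lemma \ref{lem4} combined with the factored form of (\ref{be}), and the only thing to remember is that the super-exponential decay in $m$ absorbs any polynomial factor in $|n|$. Any subtlety, such as confirming that $(z_n^\pm)$ lies in $|z|\le 1/2$ so that Lemma \ref{lem4} applies at that point, is already supplied by Lemma \ref{loc}.
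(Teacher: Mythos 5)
Your proposal is correct and follows essentially the same route as the paper: plug the bound of Lemma \ref{lem4} into the factored characteristic equation \eqref{be} at $z=z_n^\pm$, take square roots, and note that $(8D^2/m)^m$ decays faster than any power of $|n|$. The paper's proof is just a condensed version of exactly this argument.
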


\begin{proof}
Let $n = \pm (2m+1). $  We know that $z_n^\pm $ are roots of
equation \eqref{be}. Therefore, from \eqref{3.1} it follows that
$$
|z_n^\pm - \alpha_n (n, z_n^\pm)|  = O \left ((8D^2/m)^m \right )
$$
which implies (\ref{zn1}).
\end{proof}

\begin{Lemma}
\label{lem6} For $n \in \mathbb{Z} $ with large enough $|n|$
\begin{equation}\label{alpha}
\alpha_n (z)=  \frac{Ab+aB}{2n}+ O\left( 1/n^2 \right), \quad |z|
\leq 1/2
\end{equation}
and
\begin{equation}\label{azn}
z_n^\pm =  \frac{Ab+aB}{2n}+ O\left( 1/n^2 \right).
\end{equation}

\end{Lemma}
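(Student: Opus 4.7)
The plan is to write $\alpha_n(z) = \sum_{\nu \geq 1} \tau_\nu(n,z)$ using the walk expansion \eqref{tau}, isolate the dominant contribution $\tau_1(n,z)$, and show that $\sum_{\nu \geq 2} \tau_\nu(n,z) = O(1/n^2)$ uniformly for $|z| \leq 1/2$. Once \eqref{alpha} is established, \eqref{azn} will follow by substituting into the fixed-point equations supplied by Propositions \ref{evenn} and \ref{propz}.

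First I would enumerate the admissible walks in $W_n(1)$. A walk from $n$ back to $n$ with two steps of $\pm 2$ is either $(+2,-2)$ (vertex $j_1=n+2$) or $(-2,+2)$ (vertex $j_1=n-2$); the admissibility condition $j_1 \neq -n$ is automatic for $|n|>2$. Substituting into \eqref{h} gives
$$\tau_1(n,z) = \frac{p(2)q(-2)}{2n+2+z} + \frac{p(-2)q(2)}{2n-2+z} = \frac{Ab}{2n+2+z} + \frac{aB}{2n-2+z},$$
which expands as $\frac{Ab+aB}{2n}+O(1/n^2)$ uniformly in $|z|\leq 1/2$.

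For the tail $\sum_{\nu \geq 2} \tau_\nu$, I would bound each $\tau_\nu$ by pairing consecutive denominator factors. For each $k \in \{1,\ldots,\nu-1\}$, the pair $(n+j_{2k-1}+z)(n-j_{2k}+z)$ satisfies
$$|(n+j_{2k-1}+z)+(n-j_{2k}+z)| \geq 2|n|-3,$$
since $|j_{2k-1}-j_{2k}|=2$ and $|z|\leq 1/2$. Hence at least one of the two factors has modulus $\geq |n|-3/2$, while admissibility ($j_{2k-1}\neq -n$ and $j_{2k}\neq n$, both having the same parity as $n$) forces each factor's modulus to be $\geq 3/2$. The product of the pair is therefore at least $\tfrac{3}{2}(|n|-3/2)$. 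The unpaired last factor $(n+j_{2\nu-1}+z)$ is handled separately: since $j_{2\nu-1} = n - w_{2\nu} \in \{n\pm 2\}$, we have $|n+j_{2\nu-1}+z| \geq 2|n|-5/2$. Combined with $|p|,|q|\leq D$ and $|W_n(\nu)| \leq 2^{2\nu}$, this yields $|\tau_\nu(n,z)| \leq C(C'/|n|)^\nu$ for constants depending only on $D$, so $\sum_{\nu \geq 2} |\tau_\nu(n,z)| = O(1/n^2)$ for $|n|$ large, proving \eqref{alpha}.

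For \eqref{azn}: when $n$ is odd, Proposition \ref{propz} yields $z_n^\pm = \alpha_n(z_n^\pm) + O(|n|^{-p})$ for every $p$, and substituting the uniform bound \eqref{alpha} at $z=z_n^\pm$ gives \eqref{azn}; when $n$ is even, Proposition \ref{evenn} says $z_n^+=z_n^-$ exactly satisfies $z=\alpha_n(z)$, and the same substitution applies. The main technical obstacle is the tail estimate: a factor-by-factor denominator bound is inadequate because a single $|n-j_{2k}+z|$ can be $O(1)$ whenever the walk visits a vertex close to $n$; the pairwise algebraic identity $(n+j_{2k-1})+(n-j_{2k}) = 2n+(j_{2k-1}-j_{2k})$ is what recovers the needed factor of $|n|$ per pair and yields the geometric-in-$1/n$ decay of $|\tau_\nu|$.
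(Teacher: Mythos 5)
Your proposal is correct and follows essentially the same route as the paper: the walk expansion \eqref{tau}, the explicit computation of $\tau_1$, a geometric-in-$1/|n|$ tail bound obtained by pairing consecutive denominator factors (your pairwise estimate is exactly the role played by \eqref{3.3} in the paper), and then \eqref{azn} via Propositions \ref{evenn} and \ref{propz}. The only cosmetic difference is that you absorb $\tau_2$ into the uniform tail bound for $\nu\ge 2$, whereas the paper computes $\tau_2$ explicitly (which it later reuses for the refined expansion \eqref{zz}); both give the required $O(1/n^2)$.
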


\begin{proof}

We estimate $\alpha_n (z) $ by using (\ref{tau}). To evaluate
$\tau_1 (n,z) $ we consider the two-step walks from $n$ to $n.$
There are two such walks, respectively with steps $(2, -2)$ and
$(-2, 2),$ and the corresponding vertices are $j_1= n+2 $ and $j_1 =
n-2. $ Therefore, for  $|z|\leq 1/2 $ we have
\begin{equation}
\label{tau1} \tau_1(n,z)=\frac{Ab}{2n+2+z} + \frac{aB}{2n-2+z},
\end{equation}
which implies
\begin{equation}
\label{ineq4} \tau_1(n,z)= \frac{Ab+aB}{2n}+ O\left( 1/n^2 \right),
\quad |z| \leq 1/2.
\end{equation}

Next we consider $\tau_2 (n,z).$ The related set $W_n (2)$ of
four-step walks from $n$ to $n$ has two elements: $(2, 2, -2, -2)$
and $(-2, -2, 2, 2).$  The corresponding vertices are
\begin{equation*}
j_1=n+2, \quad j_2=n+4, \quad j_3=n+2
\end{equation*}
and
\begin{equation*}
j_1=n-2, \quad j_2=n-4, \quad j_3=n-2.
\end{equation*}
Therefore, in view of (\ref{h})
\begin{align}
\label{tau2}
\tau_2(n,z)&= \frac{a b A B}{[n+(n+2)+z][n-(n+4)+z][n+(n+2)+z]}\\
\nonumber &+ \frac{a b A B}{[n+(n-2)+z][n-(n-4)+z][n+(n-2)+z]},
\end{align}
so it follows that
\begin{equation}\label{ineq10}
\tau_2(n,z)=O\left( 1/n^2 \right), \quad |z|\leq 1/2.
\end{equation}

Further, if $w \in W_n (\nu), \; \nu =3, 4, \ldots   $ is a walk with
$2 \nu$ steps from $n$ to $n,$ then $h(w,z) $ is a fraction which
denominator $d(w,z) $ has the form
$$
d(w,z) =(2n\pm 2 +z) \prod_{k=1}^{\nu -1}
(n-j_{2k}+z)(n+j_{2k+1}+z).
$$
For $|z| \leq 1/2, $ we have  $|2n\pm 2 +z| \geq |n|/2$ and by
(\ref{3.2}) and (\ref{3.3}) the absolute value of every factor of the
product is greater than $ |n|/2, $ so
$$
|d(w,z)| \geq  (|n|/2)^\nu.
$$
Now the same argument as in the proof of Lemma \ref{lem4} leads to
\begin{equation}\label{ineq11}
|\tau_\nu (n,z)| \leq C^\nu/|n|^\nu, \quad \nu =3,4,\ldots,
\end{equation}
where $C$ is a constant depending only  on $a, b, A, B $. Therefore,
it follows that that
\begin{equation}\label{ineq12}
\sum_{\nu=3}^\infty |\tau_\nu (n,z)| \leq \sum_{\nu=3}^\infty
\frac{C^\nu}{|n|^{\nu}} = O\left( 1/|n|^3 \right) \quad \text{for}
\;\; |z|\leq 1/2.
\end{equation}
Now \eqref{ineq4}, \eqref{ineq10} and \eqref{ineq12} imply
(\ref{alpha}).  In view of (\ref{even2}) and (\ref{zn1}), (\ref{azn})
follows from (\ref{alpha}).
\end{proof}

Next we refine (\ref{azn}) by finding the next term in the asymptotic
expansion of $z_n^\pm $ about the powers of $1/|n|.$

\begin{Proposition}
For $n \in \mathbb{Z} $ with large enough $|n|$
\begin{equation}
\label{zz} z_n^\pm = \frac{Ab+aB}{2n}+ \frac{aB-Ab}{2n^2}+
 O\left(1/|n|^3  \right).
\end{equation}

\end{Proposition}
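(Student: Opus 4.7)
The strategy is to push the expansion of $\alpha_n(z)$ from Lemma~\ref{lem6} one order deeper, through terms of size $1/n^2$, and then to substitute $z=z_n^\pm$ using (\ref{even2}) for even $n$ and (\ref{zn1}) for odd $n$. Since $z_n^\pm=O(1/n)$ by (\ref{azn}) and the remainder in (\ref{zn1}) is $O(|n|^{-p})$ for every $p>0$, it suffices to evaluate $\alpha_n(z_n^\pm)$ modulo $O(1/n^3)$.

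Starting from $\alpha_n(z)=\sum_{\nu\ge 1}\tau_\nu(n,z)$, the explicit formula (\ref{tau1}) together with the geometric expansions
\[
\frac{1}{2n\pm 2+z}=\frac{1}{2n}\Bigl(1\mp\tfrac{2\pm z}{2n}+O(1/n^2)\Bigr)
\]
gives
\[
\tau_1(n,z)=\frac{Ab+aB}{2n}+\frac{aB-Ab}{2n^2}-\frac{(Ab+aB)z}{4n^2}+O(1/n^3),\qquad |z|\le 1/2.
\]
For $\tau_2(n,z)$, expanding both denominators in (\ref{tau2}) extracts the leading $1/n^2$ piece
\[
\frac{abAB}{4n^2}\left[\frac{1}{z-4}+\frac{1}{z+4}\right]=\frac{abAB\,z}{2n^2(z^2-16)},
\]
which is proportional to $z$ because $(z-4)^{-1}+(z+4)^{-1}$ vanishes at $z=0$; consequently $\tau_2(n,z)=O(z/n^2)+O(1/n^3)$ uniformly on $|z|\le 1/2$. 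The tail bound (\ref{ineq11}) already yields $\sum_{\nu\ge 3}|\tau_\nu(n,z)|=O(1/n^3)$.

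Combining these three pieces and setting $z=z_n^\pm=O(1/n)$, every $z$-dependent contribution at order $1/n^2$ is absorbed into $O(1/n^3)$, so
\[
\alpha_n(z_n^\pm)=\frac{Ab+aB}{2n}+\frac{aB-Ab}{2n^2}+O(1/n^3),
\]
and applying (\ref{even2}) in the even case and (\ref{zn1}) in the odd case gives (\ref{zz}). The main obstacle is the $\tau_2$ step: since \emph{a priori} $\tau_2(n,z)=O(1/n^2)$ uniformly, one must verify the algebraic identity $(z-4)^{-1}+(z+4)^{-1}=2z/(z^2-16)$ that supplies the extra factor $z$, which then combines with $z_n^\pm=O(1/n)$ to demote this term to $O(1/n^3)$ rather than corrupting the $1/n^2$ coefficient.
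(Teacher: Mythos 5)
Your proof is correct and follows essentially the same route as the paper: expand $\tau_1(n,z)$ through order $1/n^2$, observe that the two four-step walks in $\tau_2$ cancel at order $1/n^2$ once $z=z_n^\pm=O(1/n)$ (you make this explicit via the identity $(z-4)^{-1}+(z+4)^{-1}=2z/(z^2-16)$, while the paper substitutes $z=z_n^\pm$ directly and notes the cancellation), control the tail by \eqref{ineq12}, and conclude via \eqref{even2} and \eqref{zn1}. The only cosmetic difference is that you keep the $z$-dependence explicit before substituting, which does not change the argument.
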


\begin{proof}
From (\ref{tau1}) and (\ref{azn}) it follows that
$$
\begin{aligned}
\tau_1 (n,z_n^\pm) &= \frac{Ab}{2n} (1 - 1/n + O(1/n^2))
+\frac{aB}{2n} (1
+ 1/n + O(1/n^2))\\
&=\frac{Ab+aB}{2n}+ \frac{aB-Ab}{2n^2} +
 O\left( 1/|n|^3 \right).
\end{aligned}
$$
On the other hand,  (\ref{tau2}) and (\ref{azn}) imply with $z=
z_n^\pm$
$$
\tau_2(n,z_n^\pm) = \frac{-abAB}{(2n+2+z)^2 (4-z)}
+\frac{abAB}{(2n-2+z)^2 (4+z)} =O\left( \frac{1}{|n|^3} \right).
$$
Therefore, in view of (\ref{ineq12}) we obtain (\ref{zz}).
\end{proof}

{\em Remark.} Of course, one can easily get more terms of the
asymptotic expansion of $z_n^\pm$ by using \eqref{zz} and refining
further the asymptotic analysis of $\alpha_n (z_n^\pm). $

To estimate $\gamma_n = \lambda_n^+ - \lambda_n^- = z_n^+ - z_n^-$
in the next section we need the following.
\begin{Lemma} \label{lem7}
If $n = \pm (2m+1)$ then
\begin{equation}
\label{7.1} \alpha^\prime_n (z) =  O \left( 1/m^2\right) \quad
\text{for} \quad |z| \leq 1/4
\end{equation}
and
\begin{equation}
\label{D1} \alpha_n(z_n^+)-\alpha_n(z_n^-)= \gamma_n O \left(
1/m^2\right).
\end{equation}
\end{Lemma}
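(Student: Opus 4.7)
My plan is to differentiate the convergent expansion $\alpha_n(z)=\sum_{\nu\ge 1}\tau_\nu(n,z)$ from \eqref{tau} term by term, separating off the two leading terms (which are available in closed form) and handling the tail through a Cauchy-type estimate. First I would differentiate \eqref{tau1} directly to obtain
\[
\tau_1'(n,z)=-\frac{Ab}{(2n+2+z)^2}-\frac{aB}{(2n-2+z)^2}=O(1/n^2),
\qquad |z|\le 1/2.
\]
Next, each of the two summands in \eqref{tau2} is a rational function in $z$ of the form $C/[(2n\pm 2+z)^2(\mp 4+z)]$; differentiating either factor produces a contribution which is $O(1/n^3)$ or $O(1/n^2)$ respectively, so $\tau_2'(n,z)=O(1/n^2)$ uniformly on $|z|\le 1/2$.

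For the remaining terms $\nu\ge 3$, rather than redo the walk bookkeeping with an extra derivative, I would use the estimate \eqref{ineq11}, which gives $|\tau_\nu(n,z)|\le C^\nu/|n|^\nu$ throughout the disc $|z|\le 1/2$, and note that each $\tau_\nu$ is analytic there because the denominators in \eqref{h} never vanish for $|z|\le 1/2$ and large $|n|$. For any $z$ with $|z|\le 1/4$, Cauchy's formula on the circle of radius $1/4$ around $z$ (which is contained in $|w|\le 1/2$) yields
\[
|\tau_\nu'(n,z)|\le 4\,\sup_{|w-z|=1/4}|\tau_\nu(n,w)|\le 4\,C^\nu/|n|^\nu.
\]
Summing the geometric tail gives $\sum_{\nu\ge 3}|\tau_\nu'(n,z)|=O(1/|n|^3)$, and combining with the two leading estimates proves \eqref{7.1}.

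Once \eqref{7.1} is established, \eqref{D1} is a short consequence. By \eqref{azn}, for large $|n|$ both $z_n^\pm$ satisfy $|z_n^\pm|\le 1/4$, so the straight segment joining them lies entirely in the disc on which the derivative bound is valid. Since $\alpha_n$ is analytic there,
\[
\alpha_n(z_n^+)-\alpha_n(z_n^-)=\int_{z_n^-}^{z_n^+}\alpha_n'(w)\,dw,
\]
and the modulus of the right-hand side is at most $|z_n^+-z_n^-|\cdot\sup_{|w|\le 1/4}|\alpha_n'(w)|=|\gamma_n|\cdot O(1/m^2)$, which is precisely \eqref{D1}.

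The only potentially delicate point is justifying the uniform-in-$\nu$ derivative bound without having to re-enumerate walks with an extra factor from the product rule; the Cauchy estimate is what makes this clean, and it is available only because \eqref{ineq11} holds uniformly on the full disc $|z|\le 1/2$, leaving the $1/4$-margin needed for the contour.
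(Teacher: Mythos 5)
Your proposal is correct and follows essentially the same route as the paper: explicit differentiation of $\tau_1$ plus a Cauchy-type derivative bound on the remaining terms (the paper applies Cauchy once to the summed tail $\tilde\alpha_n=\sum_{\nu\ge 2}\tau_\nu$, whereas you treat $\tau_2$ explicitly and bound the $\nu\ge 3$ terms termwise, a purely cosmetic difference), and then the same segment-integral argument yields \eqref{D1}.
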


\begin{proof}

By \eqref{tau} we have
$$
\alpha_n(z) =\tau_1(n,z) + \tilde{\alpha}_n(z) \quad \text{with}
\quad \tilde{\alpha}_n(z) = \sum_{\nu=2}^\infty \tau_\nu (n,z).
$$
In view of \eqref{ineq10} and \eqref{ineq12},
\begin{equation*}
\tilde{\alpha}_n(z) = O(1/m^2) \quad \text{for} \;\; |z|\leq 1/2.
\end{equation*}
Therefore, the Cauchy formula for derivatives implies that
\begin{equation*}
\tilde{\alpha}^\prime_n (z) =O(1/m^2) \quad \text{for} \;\; |z|\leq
1/4.
\end{equation*}
On the other hand, by \eqref{tau1} we have
$$
 \partial_z \tau_1(n,z)=
 -\frac{Ab}{(2n+2+z)^2} - \frac{aB}{(2n-2+z)^2}= O \left
 (\frac{1}{m^2} \right ) \quad \text{for} \;\; |z| \leq 1/2,
$$
so (\ref{7.1}) follows.

Further we have
\begin{align}\label{D2}
\alpha_n(z_n^+)-\alpha_n(z_n^-)=\int_{z_n^-}^{z_n^+}
\alpha^\prime_n(z) \, dz,
\end{align}
where the integral is taken over the segment $[z_n^-,z_n^+]$ from
$z_n^-$ to $z_n^+.$ Therefore, by (\ref{7.1}) we obtain
$$
|\alpha_n(z_n^+)-\alpha_n(z_n^-)| \leq  |z_n^+ - z_n^-|
\sup_{[z_n^-,z_n^+]} |\alpha^\prime_n (z)| = |z_n^+ - z_n^-| \,
O(1/m^2),
$$
hence \eqref{D1} holds.

\end{proof}

\section{Asymptotic formulas for $\beta_n^\pm(z)$ and $\gamma_n.$}

In this section only odd integers $n$ with large enough $|n|$ are
considered.

We use \eqref{sigma_p} to find  precise asymptotics of $\beta_n^+
(z)$. First we analyze $\sigma^+_0 (n,z). $  If $n = 2m+1 $ with
$m\in \mathbb{N}$ then there is only one admissible walk from $-n$ to
$n$ with no negative steps. We denote this walk  by $\xi, $  so we
have $X_n (0) = \{\xi\}$ and $\sigma_0 (n,z) = h^+ (\xi, z). $ Since
\begin{equation}
\label{xi} \xi(t) = 2, \quad 1 \leq t \leq 2m+1
\end{equation}
we obtain
\begin{equation}\label{xiz}
\sigma^+_0 (n,z) =\frac{A^m B^{m+1}}
{(n-j_1+z)(n+j_2+z)\cdots(n-j_{2m-1}+z) (n+j_{2m}+z)}
\end{equation}
with $j_{\nu}=-2m-1+2\nu$, $\; \nu=1,\ldots, 2m.$

If $n= -(2m+1),$ then again $X_n (0)$ has only one element, say
\begin{equation}
\label{barxi} \bar{\xi}=(\bar{\xi_t})_{t=1}^{2m+1}, \quad
\bar{\xi}(t)=-2 \;\; \forall t.
\end{equation}
Therefore $\sigma_0 (n,z) = h^+ (\bar{\xi}, z) $ and so it follows
that
\begin{equation}\label{barxiz}
\sigma^+_0 (n,z) =\frac{a^m b^{m+1}}
{(n-j_1+z)(n+j_2+z)\cdots(n-j_{2m-1}+z) (n+j_{2m}+z)}
\end{equation}
with $j_{\nu}=2m+1-2k$, $\; \nu=1,\cdots, 2m.$

\begin{Lemma} In the above notations,
\begin{equation}\label{Sigma0}
 \sigma^+_0(n,0)=
\begin{cases}
\frac{A^m B^{m+1}}{4^{2m} (m!)^2} & \text{for} \;\;  n=2m+1, \\
\frac{a^m b^{m+1}}{4^{2m} (m!)^2}, &  \text{for} \;\;  n=-2m-1.
\end{cases}
\end{equation}
\end{Lemma}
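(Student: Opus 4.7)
The plan is to evaluate the single-walk formulas (\ref{xiz}) and (\ref{barxiz}) at $z=0$ by an explicit, index-bookkeeping calculation. Since the lemma asserts a closed-form value and there is only one admissible walk in each case, no estimates are needed — the entire proof reduces to multiplying together the factors in the denominator and reading off the numerator.

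For the case $n=2m+1$, the walk $\xi$ from $-n$ to $n$ has $2m+1$ steps all equal to $+2$, so $p(\xi_{2k})=A$ for $k=1,\ldots,m$ and $q(\xi_{2k-1})=B$ for $k=1,\ldots,m+1$; this immediately gives the numerator $A^m B^{m+1}$. For the denominator, I would split the $2m$ factors into the $m$ ``odd'' factors $n-j_{2k-1}+0$ and the $m$ ``even'' factors $n+j_{2k}+0$. Using $j_{\nu}=-2m-1+2\nu$ one computes
\begin{equation*}
n-j_{2k-1}=4(m+1-k),\qquad n+j_{2k}=4k,\quad k=1,\ldots,m,
\end{equation*}
so after reindexing the first product by $\ell=m+1-k$, both products equal $4^{m}\,m!$. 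Multiplying yields the denominator $4^{2m}(m!)^2$, proving the first line of (\ref{Sigma0}).

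For the case $n=-(2m+1)$, the argument is identical with $\bar\xi$ in place of $\xi$. Now all steps are $-2$, so $p(-2)=a$ and $q(-2)=b$ give the numerator $a^m b^{m+1}$. Using $j_{\nu}=2m+1-2\nu$ (note the typo $j_\nu=2m+1-2k$ in the excerpt — I would treat it as $2m+1-2\nu$), one finds $n-j_{2k-1}=-4(m+1-k)$ and $n+j_{2k}=-4k$. Each of the $2m$ factors carries a minus sign, but $(-1)^{2m}=1$, so the denominator is again $4^{2m}(m!)^2$, giving the second line of (\ref{Sigma0}).

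There is no real obstacle here — the only thing to watch is the alternating pattern of $n-j_{\nu}$ versus $n+j_{\nu}$ in the denominator of (\ref{h(x,z)}), and the pairing of factors so that each product collapses to $4^{m}m!$. I would write the computation once for $n=2m+1$ and then say ``the case $n=-(2m+1)$ is handled in the same way, with each factor in the denominator picking up a sign that cancels in pairs.''
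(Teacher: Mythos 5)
Your computation is correct and is essentially the paper's own proof: the paper likewise evaluates the two products $\prod_{k=1}^m [n-(-n+2(2k-1))]$ and $\prod_{k=1}^m [n+(-n+2(2k))]$ to get $4^{2m}(m!)^2$ for $n=2m+1$ and disposes of $n=-(2m+1)$ as "similar." Your explicit treatment of the negative case, including the sign cancellation via $(-1)^{2m}=1$ and the noted typo $j_\nu=2m+1-2\nu$, just fills in what the paper leaves implicit.
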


\begin{proof} In the case $n= 2m+1$ we have
$$
\prod_{k=1}^m [n-(-n+2(2k-1))] \prod_{k=1}^m [n+(-n+2(2k))]= 4^{2m}
(m!)^2,
$$
so (\ref{Sigma0}) holds. The proof is similar for $n= -2m-1.$
\end{proof}

It is well known (as a partial case of the
Euler-Maclaurin sum formula, see \cite[Sect. 3.6]{Br}) that
\begin{align}\label{HarmonicSum}
\sum_{k=1}^m \frac{1}{k}= \log m + g + \frac{1}{2m}
 + O\left( \frac{1}{m^2} \right), \quad m\in
\mathbb{N},
\end{align}
where $g= \lim_{m \to \infty}
\left( \sum_{k=1}^m \frac{1}{k} -\log{m} \right) $ is the Euler constant.

\begin{Lemma}
\label{lem9} For $n= \pm (2m+1),$
\begin{align}\label{b2}
\frac{\sigma^+_0 (n,z_n^\pm)}{\sigma^+_0(n,0)}= \left[1 -
\frac{(Ab+aB)\log m}{8m} - \frac{g(Ab+aB)}{8m} + O \left(
\frac{\log^2 m}{m^2} \right) \right].
\end{align}
\end{Lemma}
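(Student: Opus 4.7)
The idea is to compute the ratio $\sigma^+_0(n,z_n^\pm)/\sigma^+_0(n,0)$ in closed form, take a logarithm so the product becomes a harmonic sum, and then carefully expand. First, for $n=2m+1$ substitute the vertices $j_{2k-1}=-2m-1+2(2k-1)$ and $j_{2k}=-2m-1+4k$ into \eqref{xiz}: a short calculation yields
\[
n-j_{2k-1}+z=4(m+1-k)+z, \qquad n+j_{2k}+z=4k+z.
\]
Reindexing the first product by $k\mapsto m+1-k$ collapses the denominator to $\prod_{k=1}^m (4k+z)^2$, and dividing by $\sigma^+_0(n,0)=A^mB^{m+1}/(4^{2m}(m!)^2)$ gives the clean identity
\[
\frac{\sigma^+_0(n,z)}{\sigma^+_0(n,0)}=\prod_{k=1}^m\frac{1}{(1+z/(4k))^2}.
\]

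Next I would take logarithms and Taylor-expand. Since $z=z_n^\pm=O(1/m)$ by \eqref{azn}, all terms $z/(4k)$ with $1\le k\le m$ are uniformly bounded by a small constant, so
\[
\log\frac{\sigma^+_0(n,z)}{\sigma^+_0(n,0)}=-2\sum_{k=1}^m\log\!\bigl(1+\tfrac{z}{4k}\bigr)=-\frac{z}{2}\sum_{k=1}^m\frac{1}{k}+\frac{z^2}{16}\sum_{k=1}^m\frac{1}{k^2}+O\!\Bigl(|z|^3\sum_{k=1}^m\frac{1}{k^3}\Bigr).
\]
The second sum is $O(1)$ and $z^2=O(1/m^2)$, while the cubic tail is also $O(1/m^3)$, so everything but the linear term is $O(1/m^2)$. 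Plugging in the harmonic-sum asymptotic \eqref{HarmonicSum} and the sharper expansion $z_n^\pm=(Ab+aB)/(4m)+O(1/m^2)$ (obtained from \eqref{zz} together with $n=2m+1$) gives
\[
\log\frac{\sigma^+_0(n,z_n^\pm)}{\sigma^+_0(n,0)}=-\frac{(Ab+aB)\log m}{8m}-\frac{g(Ab+aB)}{8m}+O\!\Bigl(\frac{\log m}{m^2}\Bigr).
\]

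Finally, exponentiation via $e^x=1+x+\tfrac{x^2}{2}+O(x^3)$ with $x=O(\log m/m)$ produces an extra error $x^2/2=O(\log^2 m/m^2)$, which dominates the $O(\log m/m^2)$ error inside the logarithm and yields \eqref{b2} in the case $n=2m+1$. For $n=-(2m+1)$, the same substitution into \eqref{barxiz} yields denominator factors $-4(m+1-k)+z$ and $-4k+z$; the overall sign is $(-1)^{2m}=1$, so the ratio becomes $\prod_{k=1}^m (1-z/(4k))^{-2}$. Since \eqref{azn} now gives $z_n^\pm=-(Ab+aB)/(4m)+O(1/m^2)$, the two sign reversals cancel and the identical formula is recovered.

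The main obstacle is bookkeeping: four small quantities (the size of $z_n^\pm$, the $O(1/n^2)$ correction in \eqref{zz}, the remainder $1/(2m)+O(1/m^2)$ in \eqref{HarmonicSum}, and the quadratic Taylor terms) appear simultaneously, and one has to verify that every cross term is absorbed in the final $O(\log^2 m/m^2)$. The decisive observation is that the dominant error comes not from any approximation inside the logarithm but from the $x^2/2$ term when one exponentiates, which is precisely of order $\log^2 m/m^2$.
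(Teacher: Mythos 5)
Your argument is correct and follows essentially the same route as the paper: both reduce the ratio to the product $\prod_{k=1}^m\bigl(1+\mathrm{sgn}(n)\,z_n^\pm/(4k)\bigr)^{-2}$ (the paper writes it as two equal products with $c_k$ and $d_k$), take logarithms, expand using \eqref{azn} and \eqref{HarmonicSum}, and observe that the dominant $O(\log^2 m/m^2)$ error arises upon exponentiating the $O(\log m/m)$ quantity. Your bookkeeping of the sign for $n=-(2m+1)$ and of the cross terms matches the paper's treatment, so no gaps.
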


\begin{proof}
From (\ref{xiz}) and (\ref{barxiz}) it follows that
$$
\frac{\sigma^+_0 (n,z_n^\pm)}{\sigma^+_0(n,0)}=
\prod_{k=1}^{m}(1+c_k)^{-1} \prod_{k=1}^{m}(1+d_k)^{-1}
$$
with
\begin{equation}
\label{c_k}
 c_k= sgn \,(n) \, \frac{z_n^\pm}{4k},    \quad
d_k =  \frac{sgn \,(n)\, z_n^\pm}{4(m-k+1)}.
\end{equation}

One can easily see that $\prod_{k=1}^{m}
(1+c_k)^{-1}=\prod_{k=1}^{m} (1+d_k)^{-1}$ and
\begin{align*}
\log\left( \prod_{k=1}^{m} (1+c_k)^{-1} \right) &= -\sum_{k=1}^{m}
\log(1+c_k) = -\sum_{k=1}^{m} c_k + O \left( \sum_{k=1}^{m} |c_k|^2
\right).
\end{align*}
In view of \eqref{azn} and \eqref{c_k} we have
\begin{align*}
\sum_{k=1}^{m} c_k = \left( \sum_{k=1}^{m} \frac{1}{4k} \right)
\left[ \frac{Ab+aB}{4m} +  O\left( \frac{1}{m^2} \right) \right],
\end{align*}
and
\begin{equation*}
\sum_{k=1}^m |c_k|^2= \left( \sum_{k=1}^m \frac{1}{16k^2} \right)
 O\left( \frac{1}{m^2} \right) = O\left( \frac{1}{m^2} \right).
\end{equation*}
Therefore, by \eqref{HarmonicSum} we obtain
\begin{equation}
\label{sumc_k} \log\left( \prod_{k=1}^{m} \frac{1}{1+c_k} \right)=-
\frac{(Ab+aB) \log m}{16m}- \frac{g(Ab+aB)}{16m} + O\left(
\frac{\log m}{m^2} \right).
\end{equation}
Hence,
\begin{align*}
\prod_{k=1}^{m} \frac{1}{1+c_k} &= 1 -\frac{(Ab+aB) \log m}{16m}
-\frac{g(Ab+aB)}{16m} + O\left( \frac{\log^2 m}{m^2} \right),
\end{align*}
which implies \eqref{b2}.
\end{proof}

We need also the following modification of Lemma~\ref{lem9}.
\begin{Lemma}
\label{lem9a} For $n= \pm (2m+1),$  if $z= O(1/m)$  then
\begin{align}\label{mb2}
\sigma^+_0 (n,z) =\sigma^+_0(n,0) (1+O((\log m)/m)).
\end{align}
\end{Lemma}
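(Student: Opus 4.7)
The plan is to mirror the proof of Lemma~\ref{lem9} essentially verbatim, tracking how the bound $z = O(1/m)$ (rather than the sharper $z_n^\pm = (Ab+aB)/(2n) + O(1/n^2)$) propagates through the estimates. The point is that in Lemma~\ref{lem9} we exploited the concrete leading term of $z_n^\pm$ to extract the explicit coefficient $-(Ab+aB)\log m/(16m)$; if we only know $z = O(1/m)$ we cannot isolate this leading term and instead get a uniform bound of size $(\log m)/m$.

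First I would write, from \eqref{xiz} and \eqref{barxiz},
\[
\frac{\sigma_0^+(n,z)}{\sigma_0^+(n,0)} = \prod_{k=1}^m (1+c_k)^{-1} \prod_{k=1}^m (1+d_k)^{-1},
\]
where now
\[
c_k = \operatorname{sgn}(n)\,\frac{z}{4k}, \qquad d_k = \frac{\operatorname{sgn}(n)\,z}{4(m-k+1)}.
\]
Since $z = O(1/m)$, for all sufficiently large $m$ we have $|c_k|, |d_k| \le 1/2$ uniformly in $k$, so the logarithms $\log(1+c_k)$ and $\log(1+d_k)$ are well-defined and admit the usual Taylor expansion.

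Next I would estimate $\sum_{k=1}^m c_k$ and $\sum_{k=1}^m |c_k|^2$. Using $z = O(1/m)$ together with the harmonic sum identity \eqref{HarmonicSum},
\[
\Bigl|\sum_{k=1}^m c_k\Bigr| \le \frac{|z|}{4}\sum_{k=1}^m \frac{1}{k} = O\!\left(\frac{1}{m}\right) \cdot O(\log m) = O\!\left(\frac{\log m}{m}\right),
\]
while $\sum_{k=1}^m |c_k|^2 \le (|z|/4)^2 \sum_{k=1}^\infty 1/k^2 = O(1/m^2)$. Exactly the same bounds hold for the $d_k$ sums (they are just a reindexing). Therefore
\[
\log\!\Bigl(\prod_{k=1}^m (1+c_k)^{-1}\Bigr) = -\sum_{k=1}^m c_k + O\!\Bigl(\sum_{k=1}^m |c_k|^2\Bigr) = O\!\left(\frac{\log m}{m}\right),
\]
and likewise for the product over $d_k$.

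Finally, exponentiating and multiplying the two contributions yields
\[
\frac{\sigma_0^+(n,z)}{\sigma_0^+(n,0)} = 1 + O\!\left(\frac{\log m}{m}\right),
\]
which is exactly \eqref{mb2}. There is no real obstacle here beyond bookkeeping: the only mild subtlety is checking that $|c_k|, |d_k|$ are small enough to justify the logarithmic expansion uniformly in $k$, which follows immediately from the hypothesis $z = O(1/m)$.
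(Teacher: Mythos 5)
Your proposal is correct and is essentially the paper's own argument: the paper simply says to repeat the proof of Lemma~\ref{lem9} with $z_n^\pm$ replaced by $z$ and the hypothesis $z=O(1/m)$ used in place of \eqref{azn}, which is exactly the bookkeeping you carried out. Nothing further is needed.
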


\begin{proof}
We follow the proof of Lemma~\ref{lem9}, replacing $z_n^\pm$ by $z$
and using $z= O(1/m)$ instead of \eqref{azn}

\end{proof}

Next we estimate the ratio $\sigma^+_1(n,z)/\sigma^+_0(n,z).$
\begin{Lemma}
\label{lemR} If $n= \pm (2m+1),$ then
\begin{equation}
\label{r1} \sigma^+_1(n,z)=\sigma^+_0(n,z) \cdot \Phi (n,z),
\end{equation}
where
\begin{equation}
\label{r2}  \Phi (n,z)=\sum_{k=1}^{m} \varphi_k (n,z) +
\sum_{k=2}^{m} \psi_k (n,z)
\end{equation}
with
\begin{equation}
\label{r30}  \varphi_k (n,z)= \frac{bA}{(4(m+1-k) \pm z)(4k \pm  z)}
\end{equation}
and
\begin{equation}
\label{r3}  \psi_k (n,z)= \frac{aB}{(4(k-1) \pm z)(4(m+1-k) \pm z)},
\end{equation}
where we have $+$ or $-$ in front of $z$ if $n>0$ or $n<0$
respectively.
\end{Lemma}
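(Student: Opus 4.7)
My plan is to enumerate the admissible walks in $X_n(1)$ and compute $h^+(x,z)$ for each of them as an explicit multiple of $\sigma_0^+(n,z)$.

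First I treat $n=2m+1>0$. A walk $x\in X_n(1)$ has $2m+3$ steps of size $\pm 2$ summing to $2n$, so exactly one step equals $-2$; let $s$ denote its position. The resulting vertices are $j_k=-n+2k$ for $k<s$ and $j_k=-n+2k-4$ for $k\geq s$. Plugging these into the admissibility conditions $j_k\neq n$ for odd $k$ and $j_k\neq -n$ for even $k$, one checks that $s\in\{1,2\}$ forces $j_2=-n$ and $s\in\{2m+2,2m+3\}$ forces $j_{2m+1}=n$; the remaining values $s\in\{3,4,\dots,2m+1\}$ all give admissible walks. This produces exactly $2m-1$ walks, matching the $m+(m-1)$ terms on the right side of \eqref{r2}.

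Next I would compare $h^+(x,z)$ with $\sigma_0^+(n,z)=h^+(\xi,z)$. From \eqref{xiz} one sees that the denominator of $\sigma_0^+(n,z)$ equals $\bigl(\prod_{j=1}^{m}(4j+z)\bigr)^{2}$. Because the vertices of $x$ coincide with those of $\xi$ for $k<s$ and are uniformly shifted by $-4$ for $k\geq s$, the $2m+2$ denominator factors of $h^+(x,z)$---split into the odd-$k$ and even-$k$ groups---form the multiset of factors appearing in $\sigma_0^+(n,z)$'s denominator \emph{plus two extra factors} depending only on $s$. A short case analysis shows that for $s=2\ell-1$ (odd) the extras are $(4(m+2-\ell)+z)$ and $(4(\ell-1)+z)$, while for $s=2\ell$ (even) they are $(4(m+1-\ell)+z)$ and $(4(\ell-1)+z)$.

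On the numerator side, the walk $x$ has one more $q$-factor and one more $p$-factor than $\xi$. When $s$ is odd, the distinguished step produces $q(-2)=b$ while the remaining extra factor is $p(2)=A$, so the numerator ratio is $bA$; when $s$ is even, the ratio is $aB$. Reindexing by $k=\ell-1\in\{1,\dots,m\}$ in the odd case and $k=\ell\in\{2,\dots,m\}$ in the even case identifies these ratios with $\varphi_k(n,z)$ and $\psi_k(n,z)$ respectively, proving \eqref{r1} for $n>0$. The case $n=-(2m+1)$ is handled analogously with $+2$ as the distinguished step, a $+4$ vertex shift for $k\geq s$, and every denominator factor $(n\pm j_k+z)$ acquiring an overall sign so that $+z$ is replaced by $-z$ throughout. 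The main obstacle is the denominator bookkeeping: one has to track carefully, for each value of $s$, which odd-indexed and even-indexed denominator factors of $h^+(x,z)$ coincide with those of $\sigma_0^+(n,z)$ and which two factors remain---and then match the resulting parametrization by $s$ with the stated index ranges of $\varphi_k$ and $\psi_k$.
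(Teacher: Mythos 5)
Your argument is correct and is essentially the paper's own proof: the admissible one-defect walks are exactly those with the distinguished step in positions $3,\dots,2m+1$, and each $h^+(x,z)$ equals $h^+(\xi,z)$ times $bA$ or $aB$ divided by the two extra denominator factors you identify, which after your reindexing are precisely the terms $\varphi_k$ and $\psi_k$ — the same enumeration-and-ratio computation the paper carries out with its walks $x_\nu$. One remark on the case you defer to ``analogously'': for $n=-(2m+1)$ a defect step at an \emph{odd} position carries $q(2)=B$ while the extra even-position factor is $p(-2)=a$, so the $m$-term sum actually comes with $aB$ and the $(m-1)$-term sum with $Ab$, i.e.\ the roles of $bA$ and $aB$ in \eqref{r30}--\eqref{r3} are interchanged relative to the printed statement (check $m=1$, $n=-3$: the single admissible walk gives the ratio $aB/(4-z)^2$); the paper's own proof glosses over the same parity swap, and it is harmless downstream because only the symmetric combination $Ab+aB$ and the bound \eqref{r13} are used in Lemma~\ref{lem11} and beyond.
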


\begin{proof}
From the definition of $X_n (1) $ and
 \eqref{sigma_p} it follows that
\begin{equation}\label{sigma1}
\sigma^+_1(n,z)=\sum_{x\in X_n(1)} h^+ (x,z)= \sum_{\nu=2}^{2m}
h(x_\nu,z),
\end{equation}
where $x_\nu$ denotes the walk with $(\nu+1)$-th step equal to -2 and
all others equal to 2 if $n= 2m+1$ or the walk with $(\nu+1)$-th step
equal to 2 and all others equal to -2 if $n=-( 2m+1).$ The vertices
of  $x_\nu$ are given by
\begin{equation*}
j_\alpha(x_\nu)= \begin{cases} i_\alpha, & 1\leq \alpha\leq \nu\\
i_{\nu-1}, &   \alpha = \nu+1\\
i_{\alpha-2}, &  \nu+2  \leq \alpha \leq |n|+2
\end{cases}
\quad \text{with} \;\;
i_k= \begin{cases}  -n + 2k,    &  n >0\\
-n - 2k,    &    n  < 0
\end{cases}
\end{equation*}
Therefore, by \eqref{h(x,z)}
\begin{equation} \label{b12}
h(x_{2k},z)= h(\xi,z)\frac{bA}{(n-i_{2k-1}+z)(n+i_{2k}+z)},
\end{equation}
and
\begin{equation} \label{b12b}
h(x_{2k-1},z)= h(\xi,z)\frac{aB}{(n+i_{2k-2}+z)(n-i_{2k-1}+z)}.
\end{equation}
Now (\ref{sigma1})--(\ref{b12b}) imply (\ref{r1}).
\end{proof}

\begin{Lemma}\label{lem11}
If $n=\pm (2m+1)$ and  $z=O(1/m),$ then
\begin{equation}
\label{r12} \Phi (n,z) = \frac{(Ab+aB) \log m}{8m}+
\frac{g(Ab+aB)}{8m} + O \left( \frac{\log m}{m^2} \right)
\end{equation}
and
\begin{equation}\label{r13}
\Phi^*(n,z) :=  \sum_{k=1}^{m} |\varphi_k (n,z)| + \sum_{k=2}^{m}
|\psi_k (n,z)|= O \left( \frac{\log m}{m} \right).
\end{equation}

\end{Lemma}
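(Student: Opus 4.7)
The plan is to collapse each $\varphi_k$ and $\psi_k$ into a sum of simple reciprocals via partial fractions and then invoke the harmonic-sum asymptotic \eqref{HarmonicSum}. The key observation for $\varphi_k$ is that $[4(m+1-k)\pm z] + [4k\pm z] = 4(m+1)\pm 2z$ is independent of $k$, so the identity $\frac{1}{AB}=\frac{1}{A+B}\bigl(\frac{1}{A}+\frac{1}{B}\bigr)$ gives $\varphi_k = \frac{bA}{4(m+1)\pm 2z}\bigl(\frac{1}{4(m+1-k)\pm z} + \frac{1}{4k\pm z}\bigr)$. Summing over $k=1,\dots,m$ and using the substitution $k\mapsto m{+}1{-}k$ in the first inner sum, both inner sums coincide with $\sum_{j=1}^{m}(4j\pm z)^{-1}$, so $\sum_{k=1}^{m}\varphi_k = \frac{bA}{2(m+1)\pm z}\sum_{j=1}^{m}(4j\pm z)^{-1}$. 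The same scheme works for $\psi_k$: now $[4(k-1)\pm z]+[4(m+1-k)\pm z]=4m\pm 2z$, and the shift $j=k-1$ together with the analogous symmetry reduces $\sum_{k=2}^{m}\psi_k$ to $\frac{aB}{2m\pm z}\sum_{j=1}^{m-1}(4j\pm z)^{-1}$.

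Next I would expand $(4j\pm z)^{-1}=\frac{1}{4j}+O\bigl(\frac{1}{m j^2}\bigr)$ for $z=O(1/m)$; since $\sum j^{-2}$ converges, the error accumulates only to $O(1/m)$. Combining with \eqref{HarmonicSum} yields $\sum_{j=1}^{m}(4j\pm z)^{-1} = \frac{\log m + g}{4} + O(1/m)$, and similarly for the sum up to $m-1$ because $\log(m-1)=\log m + O(1/m)$. The prefactors $\frac{1}{2(m+1)\pm z}$ and $\frac{1}{2m\pm z}$ both equal $\frac{1}{2m}(1+O(1/m))$. Multiplying out, the leading term $\frac{\log m + g}{8m}$ is produced with coefficient $bA$ from the $\varphi$-sum and $aB$ from the $\psi$-sum, while each cross term carries an extra factor $1/m$. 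Adding the two contributions gives \eqref{r12}.

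For \eqref{r13} I would use the crude bounds $|4k\pm z|\geq 3k$ and $|4(m+1-k)\pm z|\geq 3(m+1-k)$, valid for $m$ large since $|z|=O(1/m)\leq 1$. This yields $|\varphi_k|\leq \frac{|bA|}{9\,k(m+1-k)}$ and an analogous bound for $|\psi_k|$ (with $k-1$ in place of $k$). The partial-fraction identity $\frac{1}{k(m+1-k)}=\frac{1}{m+1}\bigl(\frac{1}{k}+\frac{1}{m+1-k}\bigr)$ then collapses each such sum to $\frac{2}{m+1}\sum_{k=1}^{m}\frac{1}{k}=O(\log m/m)$ via \eqref{HarmonicSum}. \emph{The main obstacle} is keeping the error in \eqref{r12} at the stated $O(\log m/m^2)$ level: the $z$-correction in $(4j\pm z)^{-1}$ contributes $O(1/m)$ to the harmonic-type sum (thanks to $\sum j^{-2}<\infty$), and after multiplication by the $1/(2m)$ prefactor this lands at $O(1/m^2)$, safely below the $(\log m)/m^2$ target; one must just be careful not to lose a factor of $\log m$ when expanding the $\pm z$ in the large denominators.
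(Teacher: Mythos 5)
Your proposal is correct and follows essentially the same route as the paper: the symmetric partial-fraction collapse of $\sum\varphi_k$ and $\sum\psi_k$ to a single harmonic-type sum with a $1/(2m)$-type prefactor (which the paper states directly), the replacement $\frac{1}{4k\pm z}=\frac{1}{4k}+O(1/(mk^2))$ for $z=O(1/m)$, and then \eqref{HarmonicSum}, with \eqref{r13} obtained from the same collapse after crude lower bounds on the factors $|4k\pm z|$. The only differences are cosmetic (you spell out the identity $\frac{1}{AB}=\frac{1}{A+B}(\frac1A+\frac1B)$ and use slightly different constants in the lower bounds), so your error bookkeeping matches the stated $O(\log m/m^2)$ and $O(\log m/m)$.
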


\begin{proof}
In view of \eqref{r2}--\eqref{r3},
\begin{equation*}
\Phi (n,z) = \frac{bA}{2m+2\pm z} \sum_{k=1}^m \frac{1}{4k\pm z} +
\frac{aB}{2m+2\pm z} \sum_{k=1}^{m-1} \frac{1}{4k\pm z}.
\end{equation*}
Since
\begin{equation*}
\sum_{k=1}^m \frac{1}{4k+z} = \sum_{k=1}^m \frac{1}{4k} + O \left(
\frac{1}{m} \right) \quad \text{if~ }  z = O(1/m),
\end{equation*}
\eqref{r12} follows immediately.

To obtain \eqref{r13} we use that $|4k \pm z| \geq 2k, $ and
therefore,
$$
\Phi_n^* (n,z) \leq 4 \Phi_n^* (n,0) \leq \frac{4|bA|}{2m+2}
\sum_{k=1}^m \frac{1}{4k} + \frac{4|aB|}{2m+2} \sum_{k=1}^{m-1}
\frac{1}{4k}= O \left( \frac{\log m}{m} \right).
$$
\end{proof}

\begin{Proposition}
\label{propbeta+} For $n=\pm(2m+1)$  we have
\begin{align}\label{betaz+}
\beta_n^+(z)= \sigma^+_0(n,0) \biggl [  1 + O \left( \frac{\log
m}{m}\right) \biggr]  \quad \text{if} \;\; z= O (1/m)
\end{align}
and
\begin{align}\label{beta+}
\beta_n^+(z_n^\pm) =\sigma^+_0(n,0)
\left[  1 + O \left( \frac{\log^2 m}{m^2}\right) \right],
\end{align}
with
\begin{equation}\label{Sigma0+}
 \sigma^+_0(n,0)=
\begin{cases}
\frac{A^m B^{m+1}}{4^{2m} (m!)^2} & \text{for} \;\;  n=2m+1, \\
\frac{a^m b^{m+1}}{4^{2m} (m!)^2}, &  \text{for} \;\;  n=-2m-1.
\end{cases}
\end{equation}
\end{Proposition}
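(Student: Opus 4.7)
The plan is to split the defining series \eqref{sigma_p} as
\begin{equation*}
\beta_n^+(z) = \bigl[\sigma_0^+(n,z) + \sigma_1^+(n,z)\bigr] + \sum_{r\geq 2}\sigma_r^+(n,z),
\end{equation*}
and use Lemma~\ref{lemR} to rewrite the leading block as $\sigma_0^+(n,z)\bigl(1+\Phi(n,z)\bigr)$. Formula \eqref{Sigma0+} is merely a restatement of the value of $\sigma_0^+(n,0)$ recorded in \eqref{Sigma0}.

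For \eqref{betaz+} in the regime $z=O(1/m)$, Lemma~\ref{lem9a} gives $\sigma_0^+(n,z)=\sigma_0^+(n,0)(1+O(\log m/m))$ while Lemma~\ref{lem11} yields $\Phi(n,z)=O(\log m/m)$, whence $\sigma_0^+(n,z)(1+\Phi(n,z))=\sigma_0^+(n,0)(1+O(\log m/m))$ at once. For \eqref{beta+} at $z=z_n^\pm$ (note $z_n^\pm=O(1/m)$ by \eqref{azn}) the sharp versions apply: writing $L:=(Ab+aB)(\log m+g)/(8m)$, Lemma~\ref{lem9} gives $\sigma_0^+(n,z_n^\pm)/\sigma_0^+(n,0)=1-L+O(\log^2 m/m^2)$, while Lemma~\ref{lem11} gives $\Phi(n,z_n^\pm)=L+O(\log m/m^2)$. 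Multiplying produces the decisive cancellation
\begin{equation*}
\bigl(1-L+O(\log^2 m/m^2)\bigr)\bigl(1+L+O(\log m/m^2)\bigr)=1-L^2+O(\log^2 m/m^2)=1+O(\log^2 m/m^2),
\end{equation*}
since $L^2=O(\log^2 m/m^2)$.

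The main obstacle is controlling the tail $\sum_{r\geq 2}\sigma_r^+(n,z)$. Lemma~\ref{lem4} is far too crude here: its bound $(8D^2/m)^{m+r}$ exceeds the true order $\sigma_0^+(n,0)\sim |AB|^m/(4^{2m}(m!)^2)$ by a factor of about $m^m$ (Stirling). I would instead extend the argument of Lemma~\ref{lemR} inductively. Every $x\in X_n(r)$ arises from the canonical walk $\xi$ (or $\bar\xi$) by the insertion of $r$ backward excursions, and iterating \eqref{b12}--\eqref{b12b} writes $h^+(x,z)/h^+(\xi,z)$ as a product of $r$ factors of type $\varphi_k(n,z)$ or $\psi_k(n,z)$. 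Summing over $X_n(r)$ therefore produces a bound
\begin{equation*}
|\sigma_r^+(n,z)|\leq C\,|\sigma_0^+(n,z)|\,\bigl(\Phi^*(n,z)\bigr)^r
\end{equation*}
for an absolute constant $C$. Since Lemma~\ref{lem11} supplies $\Phi^*(n,z)=O(\log m/m)$, the geometric series converges for large $m$ and the tail contributes $|\sigma_0^+(n,0)|\cdot O(\log^2 m/m^2)$, which is absorbed into both error terms and thereby completes the proof of \eqref{betaz+} and \eqref{beta+}.
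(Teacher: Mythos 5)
Your proposal is correct and follows essentially the same route as the paper: the same split $\beta_n^+=\sigma_0^++\sigma_1^++\sum_{r\ge 2}\sigma_r^+$, the same cancellation between the asymptotics of $\sigma_0^+(n,z_n^\pm)/\sigma_0^+(n,0)$ (Lemma~\ref{lem9}) and of $\Phi(n,z_n^\pm)$ (Lemma~\ref{lem11}), and the same kind of tail estimate. Your bound $|\sigma_r^+(n,z)|\le C\,|\sigma_0^+(n,z)|\,(\Phi^*(n,z))^r$ is precisely what the paper proves, via the injective map that repeatedly deletes the first backward step together with its preceding forward step (yielding $\sigma_r^*\le \sigma_{r-1}^*\,\Phi^*$), so your ``insertion of $r$ backward excursions'' into $\xi$ is just the inverse of that deletion and only requires this bookkeeping to be made explicit.
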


\begin{proof}
From (\ref{mb2}), (\ref{r1}) and (\ref{r12}) it follows that
$$
\sigma^+_1(n,z)+\sigma^+_0(n,z) = \sigma^+_0(n,0)\left[ 1  + O \left(
\frac{\log m}{m}\right)\right] \quad \text{if} \;\; z= O (1/m).
$$
Also,  (\ref{b2}), (\ref{r1}) and (\ref{r12}) imply that
\begin{align*}
\sigma^+_1(n,z_n^\pm)+\sigma^+_0(n,z_n^\pm) =
\sigma^+_0(n,0)\left[  1  + O \left( \frac{\log^2 m}{m^2}\right)\right] .
\end{align*}
Since $\beta_n^+ (z) = \sum_{r=0}^\infty \sigma^+_r (n,z), $ to
complete the proof it is enough to show that
\begin{equation}
\label{S2} \sum_{r=2}^\infty \sigma^+_r(n,z)= \sigma^+_0(n,z) \,
O\left(\frac{\log^2 m}{m^2}\right) \quad \text{if} \;\; z= O (1/m).
\end{equation}
Next we prove (\ref{S2}). Recall that $\sigma^+_r (n,z) = \sum_{x\in
X_n(r)} h^+ (x,z).$ Now we set
$$
\sigma^*_r (n,z) := \sum_{x\in X_n(r)} |h^+ (x,z)|.
$$
We are going to show that there is an absolute constant $C>0$ such
that for $n=\pm (2m+1) $ with large enough $m$
\begin{equation}
\label{S3} \sigma^*_r (n,z) \leq \sigma^*_{r-1} (n,z) \cdot \frac{C
\log m}{m} \quad \text{if} \;\; z= O (1/m),  \;\; r= 1, 2, \ldots.
\end{equation}
Since $\sigma^+_0 (n,z)$ has one term only, we have $\sigma^*_0
(n,z)=|\sigma^+_0 (n,z)|.$

Let $r \in \mathbb{N}.$ To every walk $x \in X_n(r)$ we assign a pair
$(\tilde{x},k)$, where $k$ is such that $x(k+1) $ is the first
negative (if $n>0$) or positive (if $n<0$) step of $x$ and $\tilde{x}
\in X_n(r-1)$ is the walk that we obtain after dropping from $x$ the
steps $x(k) $ and $x(k+1). $ In other words, we consider the map
\begin{equation*}
\varphi: X_n(r)\longrightarrow X_n(r-1) \times I,   \quad
\varphi(x)=(\tilde{x},k), \;\; k \in I=\{2, \ldots, 2m\},
\end{equation*}
where $k=\begin{cases} \min\{t:x(t)=2,~x(t+1)=-2\}   &   \text{if}
\;\; n>0,\\
\min\{t:x(t)=-2,~x(t+1)=2\} &   \text{if} \;\; n< 0,
\end{cases}$
\begin{equation*}
\tilde{x}(t)=
\begin{cases}
x(t) & \text{if } 1\leq t\leq k-1, \\
x(t+2) & \text{if } k \leq t \leq 2m+2r-1.
\end{cases}
\end{equation*}
The map $\varphi$ is clearly injective and we have
\begin{equation*} \label{S5}
\frac{h(x,z)}{h(\tilde{x},z)}=
\begin{cases}
 \frac{bA}{(n-j+2\pm z)(n+j\pm z)} & \text{if $k$ is even}, \\
 \frac{aB}{(n+j-2\pm z)(n-j\pm z)} & \text{if $k$ is
odd},
\end{cases} \quad j= \begin{cases}   -n + 2k   &   \text{if} \;\; n>0,\\
-n-2k    &   \text{if} \;\; n<0,
\end{cases}
\end{equation*}
where in front of $z$ we have $+$ if $n>0$ or $-$ if $n<0.$

Since  $\varphi$ is injective, from (\ref{r3}), (\ref{r13}) it
follows that
\begin{equation*}\label{S6}
\sigma^*_r(n,z) \leq \sigma^*_{r-1}(n,z)\cdot \Phi^*(n,z).
\end{equation*}
Hence, by \eqref{r13} and (\ref{r12}), we obtain that (\ref{S3})
holds.

From (\ref{S3}) it follows (since $\sigma^*_0 (n,z)=|\sigma^+_0
(n,z)|$) that
$$
\sigma^*_r(n,z) \leq |\sigma^+_0 (n,z)| \cdot \left (\frac{C \log
m}{m}\right)^r.
$$
Hence, (\ref{S2}) holds, which completes the proof.

\end{proof}

The  asymptotics of $\beta_n^- $ could be found in a similar way. We
have the following.
\begin{Proposition}
\label{propbeta-} If $n=\pm(2m+1)$  then
\begin{align}\label{betaz-}
\beta_n^-(z)= \sigma^-_0(n,0) \biggl [  1 + O \left( \frac{\log
m}{m}\right) \biggr] \quad \text{if} \;\;  z= O (1/m)
\end{align}
and
\begin{align}\label{beta-}
\beta_n^-(z_n^\pm) =\sigma^-_0(n,0) \left[  1 + O \left( \frac{\log^2
m}{m^2}\right) \right],
\end{align}
with
\begin{equation}\label{Sigma0-}
 \sigma^-_0(n,0)=
\begin{cases}
\frac{a^{m+1} b^{m}}{4^{2m} (m!)^2} & \text{for} \; n=2m+1, \\
\frac{A^{m+1} B^{m}}{4^{2m} (m!)^2} & \text{for} \; n=-2m-1.
\end{cases}
\end{equation}
\end{Proposition}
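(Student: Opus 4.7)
The strategy is to reduce Proposition \ref{propbeta-} to Proposition \ref{propbeta+} by exploiting the symmetry identity of Lemma \ref{lemPQ}, namely $\beta_n^-(P,Q;z) = \beta_{-n}^+(Q,P;-z)$. This avoids replaying the entire walk-by-walk analysis with $Y_n(r)$ in place of $X_n(r)$ and reuses all the delicate estimates already established.

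For the estimate (\ref{betaz-}), if $z = O(1/m)$ then also $-z = O(1/m)$, so applying (\ref{betaz+}) to the potential $(Q,P)$ at $-n$ gives $\beta_n^-(P,Q;z) = \beta_{-n}^+(Q,P;-z) = \sigma_0^+(Q,P;-n,0)[1+O(\log m / m)]$. A direct inspection of (\ref{Sigma0+}), with $a \leftrightarrow b$ and $A \leftrightarrow B$ to swap the roles of $P$ and $Q$, shows that this coefficient equals the right-hand side of (\ref{Sigma0-}), proving (\ref{betaz-}).

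For the sharper claim (\ref{beta-}), the first step is to verify the set equality $\{-z_n^-(P,Q), -z_n^+(P,Q)\} = \{z_{-n}^-(Q,P), z_{-n}^+(Q,P)\}$. Substituting $z \mapsto -z$ in the basic equation (\ref{be}) and using (i) the parity identity $\alpha_n(P,Q;-z) = -\alpha_{-n}(Q,P;z)$, which I would obtain from the series (\ref{S_11})--(\ref{S_22}) by the change of indices $j_k \mapsto -j_k$ combined with (\ref{i}), and (ii) the analogue $\beta_n^+(P,Q;z) = \beta_{-n}^-(Q,P;-z)$ of (\ref{PQ}) (obtained from Lemma \ref{lemPQ} by swapping $(P,Q)$ and relabeling), one transforms the characteristic equation for $(P,Q)$ at $n$ into the characteristic equation for $(Q,P)$ at $-n$; the uniqueness in Lemma \ref{loc} then yields the set equality. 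Consequently, each $-z_n^\pm(P,Q)$ equals some $z_{-n}^{\varepsilon}(Q,P)$, and since the right-hand side of (\ref{beta+}) is the same for both roots, applying (\ref{beta+}) to $(Q,P)$ at $-n$ gives $\beta_n^-(z_n^\pm) = \sigma_0^+(Q,P;-n,0)[1 + O(\log^2 m / m^2)]$, which matches the right-hand side of (\ref{beta-}) by the same coefficient identification used for (\ref{betaz-}).

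The only genuinely new ingredient beyond invoking Proposition \ref{propbeta+} is the parity identity for $\alpha_n$, which I expect to be the main obstacle: it requires careful sign bookkeeping in the $2\nu-1$ denominators when making the substitution $j_k \mapsto -j_k$, together with the observation that the resulting overall factor $(-1)^{2\nu-1} = -1$ is responsible for the minus sign in $\alpha_n(P,Q;-z) = -\alpha_{-n}(Q,P;z)$. Once this identity is in hand, everything else is a mechanical rewriting under the symmetry.
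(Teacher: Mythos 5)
Your proof is correct, and it goes further along a route the paper only gestures at. The paper's own proof of Proposition~\ref{propbeta-} consists of two remarks: one can repeat the whole walk analysis of Proposition~\ref{propbeta+} with the sums \eqref{-sigma_p} in place of \eqref{sigma_p}, and, alternatively, the symmetry \eqref{PQ} gives \eqref{betaz-} from \eqref{betaz+} ``immediately.'' Note that the symmetry remark, as stated, really only yields the cruder estimate \eqref{betaz-}: to get the sharper \eqref{beta-} from \eqref{beta+} one must know that $-z_n^\pm(P,Q)$ are admissible arguments for \eqref{beta+} applied to the swapped potential $(Q,P)$ at $-n$, which the paper does not address (its implicit fallback for \eqref{beta-} is the step-by-step repetition). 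You supply exactly the missing ingredient: the parity identity $\alpha_n(P,Q;-z)=-\alpha_{-n}(Q,P;z)$ (the sign bookkeeping is right — each of the $2\nu-1$ denominator factors in \eqref{S_11}--\eqref{S_22} changes sign under $j_k\mapsto -j_k$, $n\mapsto -n$, $z\mapsto -z$, and \eqref{i} converts $S^{22}$ back to $S^{11}$), together with the relabeled analogue $\beta_n^+(P,Q;z)=\beta_{-n}^-(Q,P;-z)$ of \eqref{PQ}; these show that $z\mapsto -z$ carries the characteristic equation \eqref{be} for $(P,Q)$ at $n$ into that for $(Q,P)$ at $-n$, and since both equations have exactly two roots in $|z|\le 1/2$ by Lemmas~\ref{loc} and \ref{lem2}, the multisets of roots correspond, so \eqref{beta+} applies (its right-hand side being the same for both roots) and the coefficient identification $\sigma_0^+(Q,P;-n,0)=\sigma_0^-(P,Q;n,0)$ — which you verify correctly — finishes the argument. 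What your approach buys is that Lemmas~\ref{lem9}--\ref{lem11} and the injective-map estimate \eqref{S3} never have to be redone for the walks $Y_n(r)$; the cost is the extra algebraic identity for $\alpha_n$. A minor alternative you might note: the proof of \eqref{beta+} only uses $z_n^\pm=\frac{Ab+aB}{2n}+O(1/n^2)$, and since $Ab+aB$ is invariant under swapping $(P,Q)$, the point $-z_n^\pm(P,Q)$ already has the asymptotics required at $-n$ for $(Q,P)$, so one could avoid the exact root correspondence; but your argument via the transformed characteristic equation is complete and, if anything, cleaner than what the paper records.
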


\begin{proof}
One could give a proof by following step by step the proof of
Proposition~\ref{propbeta+} but analyzing the sums (\ref{-sigma_p})
instead of (\ref{sigma_p}).

However, Lemma~\ref{lemPQ} provides an alternative approach. In view
of (\ref{PQ}), formula (\ref{betaz-}) follows from (\ref{betaz+})
immediately.

\end{proof}

\begin{Theorem}
The Dirac operator \eqref{i1} considered with
$$P(x)=a e^{-2ix} + A e^{2ix}, \quad Q(x)=b e^{-2ix} + B e^{2ix},
 \quad a, A, b, B \in \mathbb{C} \setminus \{0\}, $$
has for $n\in \mathbb{Z}$ for large enough $|n|$ two periodic (if
$n$ is even) or anti-periodic (if $n$ is odd) eigenvalues
$\lambda_n^-$, $\lambda_n^+$ such that
\begin{equation}
\label{zz1} \lambda_n^\pm =n+ \frac{Ab+aB}{2n}+ \frac{aB-Ab}{2n^2}+
 O\left(1/|n|^3  \right).
\end{equation}
If $n$ is even, then $\gamma_n =\lambda_n^+ - \lambda_n^- =0. $ For
odd $n= \pm (2m+1), \; m \in \mathbb{N},$ we have
\begin{align}\label{gamma+}
\gamma_{2m+1} = \pm 2 \frac{\sqrt{(Ab)^m (aB)^{m+1}}}{4^{2m} (m!)^2
}\, \left[  1 + O \left( \frac{\log^2 m}{m^2}\right) \right].
\end{align}
and
\begin{align}\label{gamma-}
\gamma_{-(2m+1)} = \pm 2\frac{\sqrt{(Ab)^{m+1} (aB)^m}}{4^{2m} (m!)^2}
\left[  1 + O \left( \frac{\log^2 m}{m^2}\right) \right].
\end{align}
\end{Theorem}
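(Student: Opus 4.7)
\emph{Plan.} I would break the theorem into three parts. First, the asymptotic expansion \eqref{zz1} for $\lambda_n^\pm$ is immediate from $\lambda_n^\pm = n + z_n^\pm$ together with \eqref{zz}, since the formula there does not distinguish the two sheets of roots beyond order $1/|n|^3$ (the eventual splitting $\gamma_n$ is much smaller). Second, the case of even $n$, where $\gamma_n = 0$, is precisely the content of Proposition~\ref{evenn}. So all the substance is concentrated in the odd case $n = \pm(2m+1)$.

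For odd $n$, I would work from the basic equation \eqref{be} satisfied by both $z_n^+$ and $z_n^-$. By Propositions~\ref{propbeta+} and \ref{propbeta-}, the product $\beta_n^-(z)\beta_n^+(z)$ is holomorphic and nonzero in a neighborhood of the roots (its value at $z=0$ is $\sigma_0^-(n,0)\sigma_0^+(n,0)$, which is a nonzero product of powers of $a,A,b,B$). Thus a single-valued analytic branch $g(z) := \sqrt{\beta_n^-(z)\beta_n^+(z)}$ may be fixed, and \eqref{be} factors as
\begin{equation*}
\bigl(z - \alpha_n(z) - g(z)\bigr)\bigl(z - \alpha_n(z) + g(z)\bigr) = 0.
\end{equation*}
Each factor $f_\pm(z) = z - \alpha_n(z) \mp g(z)$ has derivative $1 - \alpha_n'(z) \mp g'(z) = 1 + O(1/m^2)$ by Lemma~\ref{lem7} and a Cauchy-formula estimate on $g'$ (using the very small bound on $g$ from Lemma~\ref{lem4}). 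Hence each $f_\pm$ is a small analytic perturbation of the identity and possesses exactly one zero in $|z| \leq 1/2$, so $z_n^+$ and $z_n^-$ come from the two \emph{opposite} branches. Relabeling if necessary, we may assume
\begin{equation*}
z_n^+ - \alpha_n(z_n^+) = +\,g(z_n^+), \qquad z_n^- - \alpha_n(z_n^-) = -\,g(z_n^-).
\end{equation*}

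Subtracting these two identities and using Lemma~\ref{lem7} to absorb $\alpha_n(z_n^+) - \alpha_n(z_n^-) = \gamma_n \cdot O(1/m^2)$ into the left-hand side, I obtain
\begin{equation*}
\gamma_n\bigl(1 + O(1/m^2)\bigr) = g(z_n^+) + g(z_n^-).
\end{equation*}
Propositions~\ref{propbeta+} and \ref{propbeta-} give $\beta_n^\pm(z_n^\pm) = \sigma_0^\pm(n,0)[1 + O(\log^2 m/m^2)]$, so after taking the square root and adding,
\begin{equation*}
g(z_n^+) + g(z_n^-) = \pm\, 2\sqrt{\sigma_0^-(n,0)\sigma_0^+(n,0)} \bigl[1 + O(\log^2 m/m^2)\bigr].
\end{equation*}
Substituting the explicit values from \eqref{Sigma0+} and \eqref{Sigma0-}, one reads off $\sigma_0^-(n,0)\sigma_0^+(n,0) = (Ab)^m(aB)^{m+1}/[4^{2m}(m!)^2]^2$ for $n = 2m+1$ and $(Ab)^{m+1}(aB)^m/[4^{2m}(m!)^2]^2$ for $n = -(2m+1)$, which yields \eqref{gamma+} and \eqref{gamma-} respectively.

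The main obstacle I expect is the branch/sign step: showing that $z_n^+$ and $z_n^-$ really lie on \emph{opposite} square-root branches, rather than both on the same one. If both came from the same factor, the right-hand side would collapse to $g(z_n^+) - g(z_n^-)$, which by a similar Cauchy-estimate of $g'$ is of size $\gamma_n \cdot o(1)$, forcing $\gamma_n = 0$ — inconsistent with the nonvanishing leading term $\sqrt{\sigma_0^-\sigma_0^+}$. Making the Rouché-type count of the zeros of $f_\pm$ and the bound on $g'$ rigorous on a small enough disc centered near $(Ab+aB)/(2n)$ is the delicate part; once this is in hand, the remainder of the proof is routine bookkeeping of the estimates from Section~4.
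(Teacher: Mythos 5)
Your proposal follows essentially the same route as the paper: split the basic equation $(z-\alpha_n(z))^2=\beta_n^-(z)\beta_n^+(z)$ via an analytic branch of $\sqrt{\beta_n^-\beta_n^+}$ on a small disc of radius $O(1/m)$ (where Propositions~\ref{propbeta+}--\ref{propbeta-} make the product nonvanishing), show each factor has a single root there so that $z_n^\pm$ lie on opposite branches, subtract, absorb $\alpha_n(z_n^+)-\alpha_n(z_n^-)$ via Lemma~\ref{lem7}, and read off the gap from $\sqrt{\sigma_0^-(n,0)\sigma_0^+(n,0)}$. The only cosmetic difference is that the paper establishes uniqueness of the root of each factor by a Lipschitz bound $\sup|\varphi_n'|\le 1/2$ on the disc $|z|<C/m$ (rather than a Rouch\'e count on $|z|\le 1/2$, where the branch need not be defined), which is exactly the fix you anticipate in your final paragraph.
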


\begin{proof}
For even $n$ with large enough $|n|$ we have $\lambda_n^+ =
\lambda_n^-$ by Proposition~\ref{evenn}, and \eqref{zz1} comes from
\eqref{zz}.

Let $n = \pm(2m+1), $ and let $$C= \max \{|a|^2, |b|^2, |A|^2,
|B|^2\}, \qquad  \mathbb{D}_m = \{z: \; |z| < C/m\}.$$ In view of
(\ref{azn}), for large enough $m$ we have
$$
|z_n^\pm |  \leq C/(2m),
$$
so $z_n^\pm \in \frac{1}{2}\mathbb{D}_m. $

On the other hand, from (\ref{betaz+}) and (\ref{betaz-}) it follows
that for large enough $m$
$$      \beta_n^\pm (z) = \sigma^\pm_0 (n,0) (1+ r^\pm_n (z)) \quad
\text{with}  \;\;  |r^\pm_n (z)| \leq 1/2 \quad \text{for} \;\;
z \in 2\mathbb{D}_m.
$$ We set
\begin{equation*}
\sqrt{\beta_n^- (z)\beta_n^+ (z)} := \sqrt{\sigma^-_0
(n,0)\sigma^+_0 (n,0)} \, (1+r^-_n (z))^{1/2}(1+r^+_n (z))^{1/2},
\end{equation*}
where $\sqrt{\sigma^-_0 (n,0)\sigma^+_0 (n,0)}$ is a square root of
$\sigma^-_0 (n,0)\sigma^+_0 (n,0)$ and  $(1+w)^{1/2} $ is defined by
its Taylor series about $w=0.$ Then $\sqrt{\beta_n^- (z)\beta_n^+
(z)}$ is a well-defined analytic function on $2\mathbb{D}_m, $ so
the basic equation \eqref{be} splits into two equations
\begin{align}\label{E1}
z- \alpha_n(z)- \sqrt{\beta_n^-(z)\beta_n^+(z)} &=0, \\
\label{E2} z- \alpha_n(z)+  \sqrt{\beta_n^-(z)\beta_n^+(z)} &=0.
\end{align}

Next we show that for large enough $m$ equation \eqref{E1} has at
most one root in the disc $\mathbb{D}_m. $  Let
$$
\varphi_n (z) = \alpha_n(z) + \sqrt{\beta_n^-(z)\beta_n^+(z)}, \quad
f_n (z) = z -\varphi_n (z).
$$
By (\ref{7.1}) we have $\alpha^\prime_n (z) = O(1/m^2)$  for
$|z|\leq 1/4.$ On the other hand,  Lemma~\ref{lem4} implies that
$$
\sqrt{\beta_n^-(z)\beta_n^+(z)} = O (1/m^2) \quad \text{for} \quad
z\in 2\mathbb{D}_m,
$$
so by the Cauchy formulas for the derivatives we have
$$
\frac{d}{dz} \sqrt{\beta_n^-(z)\beta_n^+(z)} = O(1/m) \quad
\text{for} \quad z\in \mathbb{D}_m.
$$
Therefore for large enough $m$
$$
\sup \{|\varphi_n^\prime (z)|\, : \; z \in \mathbb{D}_m\} \leq 1/2,
$$
which implies
$$
|\varphi_n (z_1) - \varphi_n (z_2) |   = \left |\int_{z_1}^{z_2}
\varphi_n^\prime (z) dz \right | \leq \frac{1}{2} \, |z_1 -
z_2|\quad \text{for} \;\; z_1, z_2 \in \mathbb{D}_m.
$$
Now we obtain, for $z_1, z_2 \in \mathbb{D}_m,$ that
$$
\begin{aligned}
|f_n (z_1) - f_n (z_2) | &= |(z_1 + \varphi_n (z_1)) - (z_2 +
\varphi_n (z_2))| \\
&\geq |z_1 - z_2| - |\varphi_n (z_1) - \varphi_n (z_2) | \geq
\frac{1}{2} |z_1 - z_2|.
\end{aligned}
$$
Hence the equation $f_n (z) = 0 $ (i.e., equation (\ref{E1})) has at
most one solution in the disc $\mathbb{D}_m.$ Of course, the same
argument gives that equation (\ref{E1}) also has at most one
solution in the disc $\mathbb{D}_m.$

On the other hand, we know by Lemmas~\ref{loc} and \ref{lem2} that
for large enough $m$  equation \eqref{be} has exactly two roots
$z_n^-, z_n^+$ in the disc $|z|\leq 1/2$, so either $z_n^-$ is the
root of (\ref{E1}) and $ z_n^+$ is the root of (\ref{E2}), or vise
versa $z_n^+$ is the root of (\ref{E1}) and $ z_n^-$ is the root of
(\ref{E2}). Therefore, we obtain
\begin{equation*}
z_n^+ - z_n^- -[\alpha_n(z_n^+)- \alpha_n(z_n^-)]  = \pm \left[
\sqrt{\beta_n^-(z_n^+)\beta_n^+(z_n^+)} +
\sqrt{\beta_n^-(z_n^-)\beta_n^+(z_n^-)}\right].
\end{equation*}
Now \eqref{D1}, \eqref{beta+}, \eqref{Sigma0+}, \eqref{beta-} and
\eqref{Sigma0-} imply, for $n= 2m+1,$
\begin{align*}
\gamma_n \left[ 1+ O\left( \frac{1}{m^2} \right) \right]= \pm 2
\frac{\sqrt{(Ab)^m (aB)^{m+1}}}{4^{2m} (m!)^2 } \left[  1 + O \left(
\frac{\log^2 m}{m^2}\right) \right],
\end{align*}
which yields \eqref{gamma+}.

The same argument shows that \eqref{D1}, \eqref{beta+},
\eqref{Sigma0+}, \eqref{beta-} and \eqref{Sigma0-} imply
\eqref{gamma-}.

\end{proof}

\end{document}